\numberwithin{equation}{section}
\newcommand{\M}{\operatorname{M}}
\newcommand{\rar}{\longrightarrow}
\newcommand{\rarab}[1]{\overset{#1}{\longrightarrow}}
\newcommand{\al}{\alpha}
\newcommand{\be}{\beta}
\newcommand{\ga}{\gamma}
\newcommand{\Ga}{\Gamma}
\newcommand{\de}{\delta}
\newcommand{\la}{\lambda}
\newcommand{\La}{\Lambda}
\newcommand{\eps}{\epsilon}
\newcommand{\sg}{\sigma}
\newcommand{\om}{\omega}
\newcommand{\Om}{\Omega}
\newcommand{\bF}{{\mathbb F}}
\newcommand{\bN}{{\mathbb N}}
\newcommand{\bQ}{{\mathbb Q}}
\newcommand{\bR}{{\mathbb R}}
\newcommand{\bS}{{\mathbb S}}
\newcommand{\bT}{{\mathbb T}}
\newcommand{\bZ}{{\mathbb Z}}
\newcommand{\cA}{{\mathcal A}}
\newcommand{\cG}{{\mathcal G}}
\newcommand{\cO}{{\mathcal O}}
\newcommand{\cP}{{\mathcal P}}
\newcommand{\caR}{{\mathcal R}}
\newcommand{\cS}{{\mathcal S}}
\newcommand{\cX}{{\mathcal X}}
\newcommand{\fa}{{\mathfrak a}}
\newcommand{\fb}{{\mathfrak b}}
\newcommand{\fp}{{\mathfrak p}}
\newcommand{\fq}{{\mathfrak q}}
\newcommand{\abs}[1]{\vert #1\vert}
\newcommand{\End}{\operatorname{End}}
\newcommand{\Hom}{\operatorname{Hom}}
\newcommand{\Aut}{\operatorname{Aut}}
\newcommand{\Span}{\operatorname{Span}}
\newcommand{\Isom}{\operatorname{Isom}}
\newcommand{\id}{\operatorname{id}}
\newcommand{\diag}{\operatorname{diag}}
\newcommand{\Gal}{\operatorname{Gal}}
\newcommand{\GL}{\operatorname{GL}}
\newcommand{\val}{\operatorname{val}}
\newcommand{\rad}{\operatorname{rad}}
\newcommand{\tr}{\operatorname{Tr}}
\newcommand{\sbr}{\smallbreak}
\newcommand{\indlim}[1]{\lim\limits_{\underset{N}{\longrightarrow}}}
\newtheorem{thm}{Theorem}[section]
\newtheorem{cor}[thm]{Corollary}
\newtheorem{lem}[thm]{Lemma}
\newtheorem{prop}[thm]{Proposition}
\newtheorem*{t:mainth0}{Theorem}
\theoremstyle{remark}
\newtheorem{rem}[thm]{Remark}
\newtheorem{prob}{Problem}
\newtheorem{question}{Question}
\newtheorem{example}{Example}
\newtheorem{case}{Case}
\newcommand{\bbe}{\begin{equation}}
\newcommand{\ee}{\end{equation}}
\title[Computability of dimension groups]{Computability of dimension groups}
\author{Maria Sabitova}
\address{Department of Mathematics\\
CUNY Queens College\\ 65-30 Kissena Blvd.\\ Flushing, NY
11367\\USA} 
\email{Maria.Sabitova@qc.cuny.edu}
\date{\today}
\begin{document}

\tableofcontents

\begin{abstract}  
We investigate the computability of  
the isomorphism set $\operatorname{Iso}(G_A,G_B)$ between $G_A$ and $G_B$, where $G_A$ is a subgroup of $\bQ^n$ generated by columns of integer powers of a non-singular $n \times n$-matrix $A$ with integer entries. 
Assuming that the characteristic polynomial of $A$ is irreducible -- and under an additional condition when $n$ is not prime -- we prove that  
$\operatorname{Iso}(G_A,G_B)$ is computable; that is, there exists an algorithm that determines the structure in finitely many steps. We also present illustrative examples.
%
%
 \end{abstract}
 


\maketitle

\section{Introduction}
It is well known that subgroups of $\bQ$ have a complete classification \cite{b}. However, the analogous problem for subgroups of 
$\bQ^n$ -- equivalently, torsion-free abelian groups of rank at most $n$ -- remains open 
(even in light of \cite{f2}). As shown in \cite{thom}, the classification problem becomes increasingly complex as $n$ grows. In our previous works 
\cite{s1}, \cite{s2}, \cite{ms}, \cite{s3}, we focus on a special class of subgroups of $\bQ^n$: those defined by integer matrices. These groups arise naturally in the study of toroidal solenoids and $\bZ^n$-odometers.

\sbr

Given a non-singular matrix $A\in\operatorname{M}_n(\bZ)$, the associated toroidal solenoid  is an $n$-dimensional topological abelian group. Such solenoids were first introduced by M.~C.~McCord in 1965 \cite{m}. The earliest examples, corresponding to $n=1$ and $A=2$, were studied by L.~Vietoris in 1927 \cite{v} and later extended by van Dantzig in 1930 to arbitrary $A\in\bZ$ \cite{d}. The same class of subgroups also arises as the first cohomology groups of constant base $\bZ^n$-odometers \cite{gps}, and they appear in the context of symbolic dynamics and dimension group theory via subshifts of finite type \cite{bs}.

\sbr

More precisely, for a non-singular matrix $A\in\M_n(\bZ)$, we define the subgroup $G_A$ of $\bQ^n$ generated by columns of powers $A^i$, $i\in\bZ$. 
In this paper, we investigate computability questions related to these subgroups $G_A$. In \cite{s1} and \cite{s2}, we addressed the classification problem for these groups: given two non-singular matrices $A,B\in \operatorname{M}_n(\bZ)$, we determined when the associated groups $G_A$ and $G_B$ are isomorphic (as abstract groups). The case $n=2$ was treated in \cite{s1}, while the general case was handled in \cite{s2}. Applications to $\bZ^n$-odometers were explored in \cite{ms}.
In \cite{s3}, we studied the endomorphism ring $\End(G_A)$ for arbitrary $n$ and provided a general criterion for when a rational matrix $T\in\M_n(\bQ)$ defines an endomorphism of $G_A$. When the characteristic polynomial of $A$ is irreducible -- and under an additional assumption if $n$ is not prime -- we showed that every such endomorphism commutes with $A$, the ring 
$\End(G_A)$ is commutative and a finitely generated module over the ring $\bZ[t,t^{-1}]$, and the automorphism group $\Aut(G_A)$ is a finitely generated abelian group.
Building on these theoretical results, the present paper aims to make the methods more practical by developing an algorithmic framework for computing examples. Although various examples were computed in our earlier works, here we adopt a more systematic computational approach. In particular, when the characteristic polynomial of $A$ is irreducible, so that the structure of $G_A$ is especially rigid, we prove that the isomorphism set between $G_A$ and $G_B$ is computable. That is, there exists an algorithm that determines the structure in finitely many steps. All components involved in the proofs are themselves computable and are implemented in mathematical software such as \cite{sage}. We use \cite{sage} together with the database \cite{db} to test our proofs on both two- and higher-dimensional examples. At this stage, we do not address the computational complexity or efficiency of these procedures.
%
%

\sbr

Our work is inspired by that of K.~H.~Kim and F.~W.~Roush \cite{kr}, who investigated the decidability of epimorphisms between dimension groups. In their setting, $G_A$ appears as a special case of a dimension group with $A$ non-singular (whereas in \cite{kr}, singular matrices are also allowed). Notably, epimorphisms between such groups are more restricted than general homomorphisms: if there exists an epimorphism from $G_A$ to $G_B$, then $A$ and $B$ must be conjugate in $\GL_n(\bQ)$. This implication does not hold for arbitrary group homomorphisms. For example, there exist isomorphic groups $G_A\cong G_B$ for which  matrices $A$ 
and $B$ have different eigenvalues and hence are not conjugated by a matrix in $\GL_n(\bQ)$ \cite[Example 7.6]{s1}. 
In particular, the results of \cite{kr} apply to the endomorphism ring $\End(G_A)$ and the automorphism group 
$\Aut(G_A)$ under additional assumptions;  we present new algorithms and examples for computing each.  
We also take inspiration from \cite{cp}, where the authors investigate computability of the linear representation group of constant-base 
$\bZ^2$-odometer systems.

\section{Notation}
\noindent $A,B\in\M_n(\bZ)$ non-singular \\
$h_A\in\bZ[x]$ characteristic polynomial of $A$ \\
$G_A=\left\{A^{k}{\bf x}\, \vert \, {\bf x}\in\bZ^n,\,k\in\bZ\right\}$ \\
$\caR=\bZ\left[\frac{1}{\det A} \right]$ \\
$\cP=\cP(A)=\{\text{primes }p\in\bN\text{ dividing }\det A\}$ \\
$\cP'=\cP'(A)=\left\{p\in\cP\,\vert\,\,h_A\not\equiv x^n\,(\text{mod }p)\right\}$ \\
$t_p=$ multiplicity of zero in the reduction of ${h}_A$ modulo $p$ \\
$\val_px$, $x\in\bN$, the highest power of $p$ dividing $x$ \\ 
$\abs{x}_p=p^{-\val_px}$ \\
$\bQ_p=$ field of $p$-adic numbers \\
$\bZ_{(p)}=\left.\left\{\frac{m}{n}\in\bQ \,\, \right\vert \, m,n\in\bZ,\,\,n\ne 0,\,\,(p,n)=1\right\}$ \\
$\bZ_p=$ ring of $p$-adic integers \\
$\bQ_p^n=(\bQ_p)^n=\bQ_p\times\cdots\times\bQ_p$ \\
$\bZ_p^n=(\bZ_p)^n=\bZ_p\times\cdots\times\bZ_p$ \\
$\overline{G}_{A,p}=G_A\otimes_{\bZ}\bZ_p$ \\
$\overline{\bQ}=$ algebraic closure of $\bQ$ \\
$\la=$ eigenvalue of $A$ \\
$\mu=$ eigenvalue of $B$ \\
$K=\bQ(\la)$ \\ 
$l=[\cO_K:\bZ[\la]]$ \\
${\bf u}=\left(\begin{matrix} u_1 & \ldots & u_n \end{matrix}\right)^t$ eigenvector of $A$ corresponding to $\la$ \\
${\bf v}=\left(\begin{matrix} v_1 & \ldots & v_n \end{matrix}\right)^t$ eigenvector of $B$ corresponding to $\mu$ \\
$\bZ[{\bf u}]=\{m_1u_1+\cdots +m_nu_n\,\vert\,m_1,\ldots,m_n\in\bZ \}$ \\
$\bZ[{\bf v}]=\{m_1v_1+\cdots +m_nv_n\,\vert\,m_1,\ldots,m_n\in\bZ \}$ \\
$\{\la_1,\ldots,\la_n\}=$ eigenvalues of $A$ \\
$\{\mu_1,\ldots,\mu_n\}=$ eigenvalues of $B$ \\
$\{\sg_1=\id,\sg_2,\ldots,\sg_n\}=$ embeddings of $K$ into $\overline{\bQ}$ \\
$M=\left(\begin{matrix} \sg_1({\bf u}) & \ldots & \sg_n({\bf u}) \end{matrix}\right)\in\M_n(\overline{\bQ})$ \\
$N=\left(\begin{matrix} \sg_1({\bf v}) & \ldots & \sg_n({\bf v}) \end{matrix}\right)\in\M_n(\overline{\bQ})$ \\
$\La=\diag\left(\begin{matrix} \sg_1(\la)& \ldots & \sg_n(\la)\end{matrix}\right)$ \\ 
$\Ga=\diag\left(\begin{matrix} \sg_1(\mu)& \ldots & \sg_n(\mu)\end{matrix}\right)$ \\ 
$m=(\det M)^2\in\bZ$ \\
$\cO_K=$ ring of integers of $K$ \\
$\cO_K^{\times}=$   units of $\cO_K$ \\
$\fp=$ prime ideal of $\cO_K$ above $p$ \\
$\val_{\fp}(x)=$ $\fp$-adic valuation of $x\in K$ \\
$\cO_{K,\la}=\{x\in K\,\vert\, \val_{\fp}(x)\geq 0\text{ for any prime ideal }\fp\text{ of }\cO_K\text{ not dividing }\la\}$ \\ 
$\cO^{\times}_{K,\la}=\{x\in K-\{0\}\,\vert\, \val_{\fp}(x)=0\text{ for any prime ideal }\fp\text{ of }\cO_K\text{ not dividing }\la\}$ \\
$K_{\fp}=$ completion of $K$ with respect to $\fp$ \\
$\cO_{\fp}=$ ring of integers of $K_{\fp}$ \\
$\rad(n)=$ product of all distinct prime divisors of $n\in\bZ$ \\
$(u,v)=$ greatest common divisor of $u,v\in\bZ$ \\
$X_A=$ $\bZ^n$-odometer defined by $A$ \eqref{eq:xg} \\
$\vec{N}(X_A)=$ linear representation group of $X_A$ \eqref{eq:linrepgr} \\

\section{Procedures to compute $\End(G_A)$ and $\Aut(G_A)$}
Let $A\in \operatorname{M}_n(\bZ)$ be a non-singular $n\times n$-matrix with integer entries. Denote
\bbe\label{eq:g}
G_A=\left\{A^{k}{\bf x}\, \vert \, {\bf x}\in\bZ^n,\,k\in\bZ\right\},\quad \bZ^n\subseteq G_A\subseteq\bQ^n,
\ee
where ${\bf x}\in\bZ^n$ is written as a column. Then $G_A$ is a subgroup of $\bQ^n$ (under addition). 
Denote by $\End(G_A)$ 
the endomorphism ring of $G_A$, consisting of all (group) homomorphisms $T:G_A
\rar G_A$. Denote by $\Aut(G_A)$ the group of all (group) isomorphisms $T:G_A
\rar G_A$. 
In this section, we address the computability of $\End(G_A)$ and $\Aut(G_A)$, {\it i.e.}, if there exists an algorithm that given a non-singular $A\in\M_n(\bZ)$ computes the endomorphism ring $\End(G_A)$ (resp., the group $\Aut(G_A)$) in finitely many steps. Denote 
\bbe\label{eq:car}
\caR=\caR(A)=\bZ\left[\frac{1}{\det A} \right]=\left\{k(\det A)^l\,\,\Big\vert\,\,k,l\in\bZ  \right\}.
\ee
If $T\in\End(G_{A})$, then $T\in\M_n(\caR)$. Indeed, one can check that any group homomorphism from $G_A$ to 
$G_A$ is induced by multiplication by a matrix $T\in\M_n(\bQ)$. Moreover, from the definition of $G_A$, there exists $i\in\bN\cup\{0\}$ such that $A^iT\in\M_n(\bZ)$, hence the entries of $T$ are elements of $\caR$, {\it i.e.}, $T\in\M_n(\caR)$. Thus, 
$\End(G_A)\subseteq \M_n(\caR)$ and $\Aut(G_A)\subseteq \GL_n(\caR)$, where $\GL_n(\caR)$ denotes the group of non-singular matrices
$T\in\M_n(\caR)$ such that $T^{-1}\in\M_n(\caR)$. 
Let 
\begin{eqnarray*}
\cP&=&\cP(A)=\{\text{primes }p\in\bN\text{ dividing }\det A\}, \\
\cP'&=&\cP'(A)=\left\{p\in\cP\,\vert\,h_A\not\equiv t^n\,(\text{mod }p)\right\},
\end{eqnarray*}
where $h_A\in\bZ[t]$ is the characteristic polynomial of $A$. First, we look at the easy cases when $\cP(A)=\emptyset$ or
$\cP'(A)=\emptyset$.

\begin{lem}[\cite{s3}]\label{l:easy0}
\begin{enumerate}[$(1)$]
\item If $\cP(A)=\emptyset$,  equivalently, $A\in\GL_n(\bZ)$, then 
$$G_A=\bZ^n,\quad \End({G}_A)=\M_n(\bZ),\quad \Aut({G}_A)=\GL_n(\bZ).$$
Also, for a non-singular $B\in\M_n(\bZ)$ we have that $G_A\cong G_B$ if and only if 
$B\in\GL_n(\bZ)$ if and only if $G_A=G_B$. \\

\item If $\cP'(A)=\emptyset$ and $A\not\in\GL_n(\bZ)$, then $$\End({G}_A)=\M_n(\caR),\quad \Aut({G}_A)=\GL_n(\caR).$$ 
Also, for a non-singular $B\in\M_n(\bZ)$ we have that $G_A\cong G_B$ if and only if 
$\det A$, $\det B$ have the same prime divisors and $\cP'(B)=\emptyset$.
\end{enumerate}
\end{lem}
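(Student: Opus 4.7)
For Part (1), the hypothesis $A\in\GL_n(\bZ)$ gives $A^{-1}\in\M_n(\bZ)$, hence $A^k\bZ^n\subseteq\bZ^n$ for every $k\in\bZ$, so $G_A=\bZ^n$; the $\End$ and $\Aut$ statements are then the classical ones for $\bZ^n$. For the isomorphism claim, $G_A\cong G_B$ forces $G_B\cong\bZ^n$ to be finitely generated, and the ascending chain $G_B=\bigcup_{k\geq 0}B^{-k}\bZ^n$ must therefore stabilize; stabilization collapses to $B^{-1}\bZ^n\subseteq\bZ^n$, i.e., $B\in\GL_n(\bZ)$, whence $G_B=\bZ^n=G_A$.

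For Part (2), the crucial step is to identify $G_A$ with $\caR^n$. The hypothesis $\cP'(A)=\emptyset$ says $h_A\equiv t^n\,(\text{mod }p)$ for every $p\in\cP$, so Cayley--Hamilton yields $A^n=pD_p$ with $D_p\in\M_n(\bZ)$, giving $p^{-1}\bZ^n=A^{-n}(p^{-1}A^n)\bZ^n\subseteq A^{-n}\bZ^n\subseteq G_A$. Iterating both in $k$ and over $p\in\cP$ produces $\caR^n\subseteq G_A$, while the reverse inclusion is immediate from $A^{-k}\in\M_n(\caR)$. Once $G_A=\caR^n$, every abelian-group endomorphism is automatically $\caR$-linear (since $\caR^n$ is uniquely $\caR$-divisible and torsion-free), giving $\End(G_A)=\M_n(\caR)$ and $\Aut(G_A)=\GL_n(\caR)$.

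For the isomorphism claim in Part (2), the direction $(\Leftarrow)$ is immediate from the preceding paragraph: both sides equal $\caR^n$. For $(\Rightarrow)$, the plan is to localize at each prime and compare $G_A\otimes\bZ_p$ with $G_B\otimes\bZ_p$. Hensel's lemma factors $h_B=h_1h_2$ over $\bZ_p$ with $h_1\equiv t^{t_p}\,(\text{mod }p)$ and $h_2$ coprime to $t$ modulo $p$, producing a $B$-invariant decomposition $\bZ_p^n=V_1\oplus V_2$ on which $B$ acts nilpotently-mod-$p$ and invertibly, respectively. This yields three mutually non-isomorphic local profiles for $G_B\otimes\bZ_p=\bigcup_k B^{-k}\bZ_p^n$: it equals $\bZ_p^n$ when $p\notin\cP(B)$; it equals $\bQ_p^n$ when $p\in\cP(B)\setminus\cP'(B)$ (by the same Cayley--Hamilton computation as above); and it is isomorphic to $\bQ_p^{t_p}\oplus\bZ_p^{n-t_p}$ with $0<t_p<n$ when $p\in\cP'(B)$. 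Since $G_A\otimes\bZ_p$ realizes only the first two profiles, matching at every $p$ forces $\cP(A)=\cP(B)$ and $\cP'(B)=\emptyset$.

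The main obstacle is the $(\Rightarrow)$ direction just sketched, specifically the exclusion of primes in $\cP'(B)$: this requires both the $p$-adic Hensel decomposition and the verification that the hybrid module $\bQ_p^{t_p}\oplus\bZ_p^{n-t_p}$ is not isomorphic even as an abstract abelian group to either $\bZ_p^n$ (which has no nonzero $p$-divisible element) or $\bQ_p^n$ (in which every element is $p$-divisible). The remaining assertions follow from routine divisibility bookkeeping across primes.
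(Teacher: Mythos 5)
The Lemma carries a citation to \cite{s3}, so the present paper does not supply a proof for comparison; I can only assess your argument on its own terms, together with its fit with the paper's general framework.

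Your proof is correct. Part (1) is right: $A\in\GL_n(\bZ)$ forces $A^{-k}\bZ^n=\bZ^n$ for all $k$, and for the converse the noetherian-chain argument (finitely generated image forces the ascending chain $\{B^{-k}\bZ^n\}_k$ to stabilize, hence $B\bZ^n=\bZ^n$, hence $B\in\GL_n(\bZ)$) is the standard route. In Part (2), the identification $G_A=\caR^n$ via Cayley--Hamilton is correct, with the one small gap being the passage from $p^{-k}\bZ^n\subseteq G_A$ for each individual $p\in\cP$ to $(\det A)^{-1}\bZ^n\subseteq G_A$: this follows by a partial-fractions/CRT decomposition $\tfrac{1}{\prod p_i^{e_i}}=\sum_i\tfrac{c_i}{p_i^{e_i}}$, which you should state since $G_A$ is only a priori closed under addition, not under all products of the $p_i^{-1}$. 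The observation that abelian-group endomorphisms of $\caR^n$ are automatically $\caR$-linear (unique $p$-divisibility, torsion-freeness) is correct and gives $\End(G_A)=\M_n(\caR)$, $\Aut(G_A)=\GL_n(\caR)$. For the $(\Rightarrow)$ direction of the isomorphism criterion, the localization argument is exactly the tool the paper itself relies on (see the notation $\overline{G}_{A,p}=G_A\otimes_\bZ\bZ_p$): the Hensel factorization $h_B=h_1h_2$ with $h_1\equiv t^{t_p}$ and $h_2(0)$ a $p$-adic unit yields $\overline{G}_{B,p}\cong\bQ_p^{t_p}\oplus\bZ_p^{n-t_p}$, and the maximal divisible $\bZ_p$-submodule distinguishes the three local profiles $\bZ_p^n$, $\bQ_p^n$, and $\bQ_p^{t_p}\oplus\bZ_p^{n-t_p}$ with $0<t_p<n$, forcing $\cP(A)=\cP(B)$ and $\cP'(B)=\emptyset$. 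This is the approach consistent with the methodology used throughout the paper and its references.
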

Clearly, the conditions in Lemma \ref{l:easy0} are computable. 

\begin{prop}\label{pr:decide}
Let $A\in\M_n(\bZ)$ be an arbitrary non-singular matrix. Then elements of $G_A$, $\End(G_A)$, and $\Aut(G_A)$ 
are computable. Also, given non-singular $A,B\in\M_n(\bZ)$ and $T\in\M_n(\bQ)$ $($resp., $T\in\GL_n(\bQ))$, it is decidable whether $T$ is a homomorphism $($resp., isomorphism$)$ from $G_A$ to $G_B$. 
\end{prop}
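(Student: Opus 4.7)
The strategy is to reduce every claim to a finite computation.

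For membership in $G_A$: given $\mathbf{y}\in\bQ^n$, write $\mathbf{y}=\mathbf{z}/d$ with $\mathbf{z}\in\bZ^n$ and $d\in\bN$. Since $G_A\subseteq\caR^n$, every prime divisor of $d$ must divide $\det A$, and this is checkable by prime factorization. If it fails, $\mathbf{y}\notin G_A$. If it holds, then $\mathbf{y}\in G_A$ is equivalent to $A^k\mathbf{y}\in\bZ^n$ for some $k\geq 0$. Viewing $\mathbf{y}\bmod\bZ^n$ as an element of the finite group $\tfrac{1}{d}\bZ^n/\bZ^n\cong(\bZ/d\bZ)^n$, multiplication by $A$ defines an endomorphism, so the orbit of $\mathbf{y}$ under iterated $A$ is eventually periodic; we can decide whether $0$ lies in that orbit within at most $d^n$ iterations. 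This settles computability of elements of $G_A$.

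Next, deciding whether $T\in\M_n(\bQ)$ defines a homomorphism $G_A\to G_B$ is equivalent to verifying $T(A^{-k}\mathbf{e}_j)\in G_B$ for every $k\geq 0$ and $j\in\{1,\dots,n\}$, each single condition being decidable by the membership procedure above. The remaining issue is to bound the range of $k$, and I would do this prime by prime. Compute the Smith normal form of $A$ over $\bZ$ to read off, for each $p\in\cP(A)$, the $p$-adic Smith data and hence the explicit description $\overline{G}_{A,p}=V_p^{-1}\bigl(\bQ_p^{r_p}\oplus\bZ_p^{n-r_p}\bigr)$, where $V_p\in\GL_n(\bZ_p)$ and $r_p$ is the number of elementary divisors of $A$ divisible by $p$; analogously for $B$. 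The containment $T(\overline{G}_{A,p})\subseteq\overline{G}_{B,p}$ then decomposes, in the adapted basis, into the vanishing of one block of the conjugated matrix (a $\bQ_p$-linear condition) and the $\bZ_p$-integrality of another (a $p$-adic valuation inequality), both testable from $T$ after computing $V_p,V_{B,p}$ to a finite precision $p^N$ determined by the Smith data. At primes $p\notin\cP(A)\cup\cP(B)$ the condition reduces to $T\in\M_n(\bZ_p)$, which is automatic once the a priori necessary condition $T\in\M_n(\caR(B))$---forced by $T(\bZ^n)\subseteq G_B\subseteq\caR(B)^n$---is checked from the denominators of the entries of $T$. Aggregating these finitely many local conditions yields the decision procedure. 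Setting $B=A$ specializes to $\End(G_A)$; for $\Aut(G_A)$ and for isomorphism tests $G_A\to G_B$, one additionally verifies that $T^{-1}\in\M_n(\bQ)$, computed by matrix inversion, satisfies the corresponding homomorphism procedure in the opposite direction.

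The main obstacle is the step just described: reducing the a priori infinite family of conditions indexed by $k\geq 0$ to a finite one. This hinges on the stabilization $\bigcup_k A^{-k}\bZ_p^n=V_p^{-1}(\bQ_p^{r_p}\oplus\bZ_p^{n-r_p})$, which is a direct consequence of the $p$-adic Smith normal form. Once this explicit presentation of $\overline{G}_{A,p}$ is in hand, every subsequent verification becomes routine linear algebra over $\bQ_p$ and $\bZ_p$ at finite precision.
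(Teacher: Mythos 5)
Your handling of membership in $G_A$ is correct and essentially coincides with the paper's: reduce modulo $\bZ^n$ to a finite group and note that the iterates of $A$ are eventually periodic, so only finitely many powers need to be tested. The gap is in the homomorphism test, specifically the claimed stabilization
$$
\bigcup_{k\geq 0} A^{-k}\bZ_p^n \;=\; V_p^{-1}\bigl(\bQ_p^{r_p}\oplus\bZ_p^{n-r_p}\bigr),
$$
with $V_p$ coming from the Smith normal form and $r_p$ the number of elementary divisors divisible by $p$. This is false. The Smith normal form is a two-sided normal form, $A=U_pD_pV_p$, so while $A^{-1}\bZ_p^n=V_p^{-1}D_p^{-1}\bZ_p^n$, the iterate $A^{-2}\bZ_p^n=V_p^{-1}D_p^{-1}U_p^{-1}V_p^{-1}D_p^{-1}\bZ_p^n$ does not simplify, because the middle factor $U_p^{-1}V_p^{-1}$ need not interact well with $D_p^{-1}$. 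Concretely, take $A=\begin{pmatrix}2&1\\0&2\end{pmatrix}$ and $p=2$: the elementary divisors are $1$ and $4$, so $r_2=1$, and your formula predicts $\overline{G}_{A,2}\cong\bQ_2\oplus\bZ_2$. But $A^{-k}=\begin{pmatrix}2^{-k}&-k2^{-k-1}\\0&2^{-k}\end{pmatrix}$, so $\overline{G}_{A,2}=\bQ_2^2$. Thus both the conjugating matrix and the rank $r_p$ are wrong.

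The correct rank is $t_p$, the multiplicity of $0$ in $h_A\bmod p$, which is a conjugation invariant; the Smith elementary divisors are not. To get a usable presentation $\overline{G}_{A,p}=W_p^{-1}(\bQ_p^{t_p}\oplus\bZ_p^{n-t_p})$ one needs a conjugation over $\GL_n(\bZ_p)$ splitting $A$ into a block with char.\ poly.\ $\equiv t^{t_p}\ (\mathrm{mod}\ p)$ and a block invertible over $\bZ_p$, obtained by Hensel-lifting the coprime factorization $h_A\equiv t^{t_p}g(t)\ (\mathrm{mod}\ p)$ with $g(0)\not\equiv 0$. Once this replacement is made, the rest of your plan (block-vanishing plus $p$-adic integrality at finite precision, together with inverting $T$ for the isomorphism case) is sound, and at that point your route is structurally close to the paper's, which effects the same spectral splitting by collecting Jordan blocks over a number field according to whether their eigenvalues are divisible by a chosen prime above $p$, and then exploits finiteness of the relevant power semigroups in $\cO_L/\fp^r$.
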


\begin{proof}
We begin by showing that the elements of   $G_A$ are computable. In other words, given a vector ${\bf x}\in\bQ^n$ and a non-singular $A\in\M_n(\bZ)$, one can write an algorithm that decides whether ${\bf x}\in G_A$ in finitely many steps. Indeed, 
${\bf x}\in G_A$ if and only if there exists $i\in\bN\cup\{0\}$ such that $A^i{\bf x}\in\bZ^n$. 
Let $k\in\bN$ be such that $k{\bf x}\in \bZ^n$, {\it e.g.}, the least common multiple of the denominators of the entries of ${\bf x}$. The set $\{A^j\}_{j\in\bN\cup\{0\}}$ is finite modulo $k\cdot\M_n(\bZ)$. Thus, it is enough to check
whether $A^i{\bf x}\in\bZ^n$ for finitely many powers $i$.

\sbr

Let $A,B\in\M_n(\bZ)$ be non-singular and let $T\in\M_n(\bQ)$ be non-zero. We show that it is decidable whether $T$ is a homomorphism
from $G_A$ to $G_B$, {\it i.e.}, one can write an algorithm that decides whether $T(G_A)\subseteq G_B$ in finitely many steps. 
It follows from the definition of $G_A$ that if $T(G_A)\subseteq G_B$, then
there exists $i\in\bN\cup\{0\}$ such that $B^iT\in\M_n(\bZ)$. Equivalently, each column of $T$ lies in $G_B$, which, by the previous paragraph, is a decidable condition. Moreover, if this holds, then one can compute a power $i$ such that $B^iT\in\M_n(\bZ)$. Then, 
  $T(G_A)\subseteq G_B$ if and only if $B^iT(G_A)\subseteq G_B$. Thus, 
without loss of generality, $T\in\M_n(\bZ)$. If $A\in\GL_n(\bZ)$, then $G_A=\bZ^n$ and $T$ is a homomorphism. Hence, for the rest of the proof we assume that $\cP(A)\ne\emptyset$. 
%
%
%
%
Let $\overline{\bQ}$ be a fixed algebraic closure of $\bQ$ and let
$L\subset \overline{\bQ}$ denote a number field, containing the eigenvalues of both $A$ and $B$. 
We find Jordan forms  
$A=MJM^{-1}$, $B=N\tilde{J}N^{-1}$, where without loss of generality we can assume that $M,N\in\M_n(\cO_L)$, where
$\cO_L$ denotes the ring of integers of $L$. 
Let $\la_1,\ldots,\la_s\in\cO_L$ (resp., $\mu_1,\ldots,\mu_t\in\cO_L$) be all the distinct eigenvalues of $A$ (resp., $B$), $1\leq s\leq n$, $1\leq t\leq n$.  
Let 
$A=\sum_{i=1}^sA_i$ (resp., $B=\sum_{i=1}^tB_i$), where $A_i,B_i\in\M_n(L)$, 
$$
A_i=A(J_i)=MD_{A,i}M^{-1},\quad 
D_{A,i}=D(J_i)=\left(
\begin{array}{ccccccc}
\ddots &        &        &        &        &        &        \\
       & \bm{0} &        &        &        &        &        \\
       &        & J_i    &        &        &        &        \\
       &        &        & \bm{0} &        &        &        \\
       &        &        &        & \ddots &        &        \\
\end{array}
\right),
$$
$$
J_i =
\begin{bmatrix}
\lambda_i & 1       & 0       & \cdots & 0 \\
0       & \lambda_i & 1       & \cdots & 0 \\
\vdots  & \ddots  & \ddots  & \ddots & \vdots \\
0       & \cdots  & 0       & \lambda_i & 1 \\
0       & \cdots  & \cdots  & 0       & \lambda_i
\end{bmatrix},
$$
$\bm{0}$ represents zero matrices, and similarly for $B_j$, $1\leq j\leq t$. 
For a prime ideal $\fp$ of $\cO_L$, let $\cO_{\fp}$ denote the localization of $\cO_L$ at $\fp$. 
For $j\in\bZ$, 
note that $A^j=\sum_{i=1}^sA^j_i$, where we denote $A_i^{-1}=A(J_i^{-1})$. For $T\in\M_n(\bZ)$, from the definition of $G_A$, 
we have that   
$T(G_A)\subseteq G_B$ if and only if for any $k\in\bZ$ there exists
$l_k\in\bN\cup\{0\}$ such that $B^{l_k}TA^k\in\M_n(\bZ)$ if and only if $B^{l_k}TA^k\in\M_n(\bZ_{(p)})$ for any $p\in\cP(A)$, since there are only primes dividing $\det A$ in the denominators of $B^{l_k}TA^k$ (recall that $T\in\M_n(\bZ)$ and $l_k\geq 0$). 
For each $p\in\cP(A)$, we choose one prime ideal $\fp$ of $\cO_L$ lying above $p$. Let $\cS$ denote the set of all those prime ideals of $\cO_L$. Then $T(G_A)\subseteq G_B$ if and only if for any 
$\fp\in\cS$ and any $k\in\bZ$ there exists $l_k\in\bN\cup\{0\}$ such that $B^{l_k}TA^k\in\M_n(\cO_{\fp})$. 
For $k\in\bZ$, we write 
\begin{eqnarray*}
(A_{\fp})^k&=&\sum_{i:\,\fp\vert\la_i}A_i^k, \quad (A_{\fp}')^k=\sum_{j:\,\fp\nmid\la_j}A_j^k, \\
A^k&=&(A_{\fp})^k+(A_{\fp}')^k, 
\end{eqnarray*}
and similarly for $B$. 
Let $\cX_{A,\fp}$ (resp., $\cX_{B,\fp}$) denote the span over $L$ of the columns of $M$ (resp., $N$) corresponding to eigenvalues of $A$ (resp., $B$) divisible by $\fp\in\cS$ in $\cO_L$. It follows from the proof of \cite[Theorem 4.3]{s2} that if $T(G_A)\subseteq G_B$, then $T(\cX_{A,\fp})\subseteq \cX_{B,\fp}$ for each $\fp\in\cS$, which are computable conditions. 
Thus, we now assume that 
$T(\cX_{A,\fp})\subseteq \cX_{B,\fp}$ for each $\fp\in\cS$, since otherwise $T(G_A)$ is not inside $G_B$. 
Equivalently, 
$(B_{\fp}')^{l_k}T(A_{\fp})^k={\bm{0}}$, the zero matrix. Also, note that for any $k\in\bZ$ there exists $l_k\in\bN$ such that 
$(B_{\fp})^{l_k}TA^k\in\M_n(\cO_{\fp})$, since for $d$ big enough and $\la_i$ divisible by $\fp$, we have that $B_i^d\in\M_n(\fp)$.  
Thus,
$$
B^{l_k}TA^k=\left((B_{\fp})^{l_k}+(B_{\fp}')^{l_k}\right)T\left((A_{\fp})^{k}+(A_{\fp}')^{k}\right)=
(B_{\fp})^{l_k}TA^k+(B_{\fp}')^{l_k}T(A_{\fp}')^{k},
$$
and for any $k\in\bZ$ there exists $l_k\in\bN$ such that 
$B^{l_k}TA^k\in\M_n(\cO_{\fp})$ if and only if for any $k\in\bZ$ there exists $l_k\in\bN$ such that
\bbe\label{eq:last0}
(B_{\fp}')^{l_k}T(A_{\fp}')^{k}\in\M_n(\cO_{\fp}).
\ee 
It might happen that $\fp$ divides the determinant of $M$ and hence $A_i$ might not be integral over $L$. Similarly, with $A_i^k$ for an integer $k$, even when $\fp$ does not divide $\la_i$. 
However, the valuations of the entries of $A_i^k$ at $\fp$ are bounded below by the valuation of $\det M$ at $\fp$ for $i$ such that $\la_i$ is not divisible by $\fp$. Similarly, for $B$. 
More precisely, let $r=\val_{\fp}(\det M)+\val_{\fp}(\det N)$. 
Since
for each $\la_i$ (resp., $\mu_j$) not divisible by $\fp$, the image of $J_i$ (resp., $\tilde{J}_j$, the Jordan cell of $B$ corresponding to $\mu_j$) in $\cO_L/\fp^r$ is non-singular, we have that the image of 
$\{J^k_i \}_{k\in\bZ}$ (resp., $\{\tilde{J}^k_j \}_{k\in\bN}$) in $\cO_L/\fp^r$ is a finite group. Thus, in \eqref{eq:last0}, one checks only finitely many pairs $(l_k,k)$. This shows that there are finitely many computable steps in deciding whether $T(G_A)\subseteq G_B$.  Hence,  elements of $\End(G_A)$, $\Aut(G_A)$, and 
$\Hom(G_A,G_B)$ are computable. 
\end{proof}

\begin{example}
Let $A\in\M_3(\bZ)$ be a matrix with characteristic polynomial $h_A=(x-2)(x-3)^2$. It is known that there exists 
$S\in\GL_3(\bZ)$ such that $SAS^{-1}$ is upper-triangular. Thus, without loss of generality, we can assume that $A$ is upper-triangular. For example,
$$
A=
  \begin{pmatrix}
    2 & 1 & 3 \\
    0 & 3 & 1 \\
    0 & 0 & 3
  \end{pmatrix},\quad A=MJM^{-1},\quad
  M=
  \begin{pmatrix}
    2 & 1 & 2 \\
    0 & 1 & 0 \\
    0 & 0 & 1
  \end{pmatrix},\quad
J=
  \begin{pmatrix}
    2 & 0 & 0 \\
    0 & 3 & 1 \\
    0 & 0 & 3
  \end{pmatrix}.  
  $$
  Then $\End(G_A)$ is generated as a left and right $\bZ[A]$-module by 
  $$
X=X(a,Y)=
  \begin{pmatrix}
    a & 0 & 0 \\
    0 & d & e \\
    0 & f & g
  \end{pmatrix},\quad a,d,e,f,g\in\bZ,\quad 
  Y=
  \begin{pmatrix}
d & e \\
f & g
  \end{pmatrix}.  
  $$
 Hence, $\End(G_A)=\{MXM^{-1}\}$, where $X=X(a,Y)$ with  $a\in\bZ[\frac{1}{2}]$ and $Y\in\M_2(\bZ[\frac{1}{3}])$, and
$\Aut(G_A)=\{MXM^{-1}\}$, where $X=X(a,Y)$ with  $a\in\cO_{\bQ,2}^{\times}$ and 
$Y\in\GL_2(\bZ[\frac{1}{3}])$.
\end{example}


By Lemma \ref{l:easy0}, we may assume for the remainder of the section that 
$A\not\in\GL_n(\bZ)$ and $\cP'(A)\ne\emptyset$ in order to compute $\End(G_A)$ and $\Aut(G_A)$. For a prime $p\in\bN$, we denote by 
$t_p=t_p(A)$ the multiplicity of zero in the reduction of characteristic polynomial $h_A$ of $A$ modulo $p$, $0\leq t_p\leq n$. In particular, $p\in\cP$ if and only if $t_p\ne 0$ and $p\in\cP'$  if and only if  $0<t_p< n$.

\subsection{Irreducible characteristic polynomial} 
 In this work, we concentrate on the most explicit case, when 
$n\geq 2$ is arbitrary, the characteristic polynmial $h_A$ of $A$ is irreducible, and there exists a prime $p\in\bN$
such that $n$, $t_p$ are coprime, denoted by 
$(n,t_p)=1$.  It is known from our previous work \cite{s2} that under these assumptions, the structure of $G_A$ is especially rigid. 
That allows us to make calculations especially concrete. 

\begin{thm}[{\it c.f.}, \cite{kr}]\label{th:endcomp}
Let $A\in\M_n(\bZ)$ be non-singular. Assume that characteristic polynomial $h_A\in\bZ[t]$ of $A$ is irreducible and there exists a prime $p\in\bN$ such that $(n,t_p)=1$. 
Then $\End(G_A)$, $\Aut(G_A)$ are computable.
\end{thm}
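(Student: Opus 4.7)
The plan is to exploit the rigid structure of $\End(G_A)$ established in \cite{s3}, together with the decidability of membership in $\End(G_A)$ provided by Proposition \ref{pr:decide}. Under the hypotheses of the theorem, \cite{s3} shows that every $T\in\End(G_A)$ commutes with $A$, that $\End(G_A)$ is commutative, and that it is a finitely generated module over $\bZ[t,t^{-1}]$. These facts reduce the computability problem to a bounded, finite search, to which Proposition \ref{pr:decide} can then be applied as an oracle.

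First I would identify $\End(G_A)$ with a subring of the number field $K=\bQ(\la)$. Since every endomorphism commutes with $A$ and $h_A$ is irreducible of degree $n$, the matrix $M=(\sg_1({\bf u}) \,\, \ldots \,\, \sg_n({\bf u}))$ simultaneously diagonalizes $A$ and every $T\in\End(G_A)$, so $T$ corresponds to multiplication by a unique element $\al(T)\in K$. This yields an injective ring homomorphism $\Phi\colon\End(G_A)\hookrightarrow K$ with $\Phi(A)=\la$, whence $\bZ[\la,\la^{-1}]\subseteq\Phi(\End(G_A))$.

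Next I would pin down a computable ambient module for $\Phi(\End(G_A))$. The results of \cite{s3} place $\Phi(\End(G_A))$ inside $\cO_{K,\la}$, and more precisely inside an explicit finitely generated $\bZ[\la,\la^{-1}]$-submodule of $K$, namely $\tfrac{1}{l}\cO_{K,\la}$ with $l=[\cO_K:\bZ[\la]]$. Both $\cO_K$ with an integral basis and the index $l$ are computable from $h_A$ by standard algorithms in \cite{sage}. Since the quotient of this ambient module by $\bZ[\la,\la^{-1}]$ is finite (annihilated by $l$), there are only finitely many $\bZ[\la,\la^{-1}]$-submodules lying between $\bZ[\la,\la^{-1}]$ and the ambient module, and these can be enumerated effectively.

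For each candidate submodule I would translate its generators back to matrices in $\M_n(\bQ)$ via $M$ and apply Proposition \ref{pr:decide} to test whether each generator lies in $\End(G_A)$; the maximal submodule all of whose generators pass the test determines $\End(G_A)$, and by construction it contains $A$ and $A^{-1}$. For $\Aut(G_A)$, once $\End(G_A)$ is recognized as an order in $K$, its unit group is finitely generated by Dirichlet's theorem and can be computed by standard algorithms for unit groups of orders; one then restricts to units whose matrix representative lies in $\Phi(\End(G_A))$ to obtain $\Aut(G_A)$. The main obstacle will be extracting from \cite{s3} explicit denominator and $\la$-power bounds so that the ambient module, and hence the enumeration of intermediate submodules, is genuinely finite and effective; the theoretical content of such bounds is available, but organizing them into an honest algorithm, and certifying maximality via the oracle of Proposition \ref{pr:decide}, is where the substantive work lies.
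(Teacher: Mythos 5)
Your treatment of $\End(G_A)$ is essentially the paper's argument in slightly different clothing. The paper also identifies $T=T(x)=MXM^{-1}$ with $x\in\cO_{K,\la}$ via \cite[Proposition 4.1]{s3}, reduces to $x\in\cO_K$ and $T\in\M_n(\bZ)$, and then observes that it suffices to test the finitely many coset representatives of $\cO_K/\bZ[\la]$; your enumeration of all intermediate $\bZ[\la,\la^{-1}]$-submodules between $\bZ[\la,\la^{-1}]$ and the ambient module accomplishes the same thing with more overhead. Two small corrections: the correct ambient module from \cite{s3} is $\cO_{K,\la}$ itself, not $\tfrac{1}{l}\cO_{K,\la}$ (the latter is harmless but unnecessary), and rather than enumerating submodules you can simply check, coset by coset, whether $T(x)\in\End(G_A)$ -- the set of passing cosets generates $\Phi(\End(G_A))$ automatically because it is a $\bZ[\la,\la^{-1}]$-module.

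The $\Aut(G_A)$ part has a genuine gap. You assert that $\End(G_A)$ ``is recognized as an order in $K$'' and invoke Dirichlet's unit theorem for orders. But $\End(G_A)$ is \emph{not} an order: it contains $\bZ[\la,\la^{-1}]$, hence is not finitely generated as a $\bZ$-module, and its image in $K$ sits inside $\cO_{K,\la}$, a ring with infinitely many units ($\la^k$ for all $k\in\bZ$, among others). The relevant structure theorem is the $S$-unit theorem: $\cO_{K,\la}^{\times}$ is finitely generated, where $S$ is the set of primes of $\cO_K$ dividing $\la$, and $\Phi(\Aut(G_A))$ is a subgroup of $\cO_{K,\la}^{\times}$. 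Moreover, even granting a generating set $f_1,\ldots,f_r$ of $\cO_{K,\la}^{\times}$, one cannot simply ``restrict to units whose matrix representative lies in $\Phi(\End(G_A))$'': the subgroup of interest consists of $x$ with \emph{both} $T(x)$ and $T(x^{-1})$ in $\End(G_A)$, and deciding which products $\prod f_i^{a_i}$ satisfy this, out of infinitely many, requires a finiteness argument. The paper supplies exactly this: after clearing denominators by appropriate powers of $\la$, membership depends only on $x$ modulo $N=\prod_{p\mid(l,m)}p^{g_p}$, which bounds the exponents $a_i$ by the orders of (suitably $\la$-scaled) $f_i$ in $\cO_K/N\cO_K$ and reduces everything to finitely many congruences. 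Your proposal gestures at ``explicit denominator and $\la$-power bounds'' as a remaining obstacle without resolving it -- but that resolution is precisely the substance of the automorphism half of the theorem, so the plan as written does not yet establish computability of $\Aut(G_A)$.
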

\begin{proof}
Let $A\in\M_n(\bZ)$ be given. 
We first find the characteristic polynomial of $A$ and check whether it is irreducible, which is known to be a computable condition (see {\it e.g.}, \cite[Algorithm 16.22]{gg}). We then compute $t_p$ for all $p\in\cP$ and verify the second condition 
$(n,t_p)=1$. All of these steps are computable. 
Let $\overline{\bQ}$ denote a fixed algebraic closure of $\bQ$ and let $\la\in\overline{\bQ}$ be an eigenvalue of $A$, $K=\bQ(\la)$, and let ${\bf u}\in K^n$ be an eigenvector of $A$ corresponding to $\la$.  
Multiplying ${\bf u}$ by an appropriate integer, without loss of generality, we can assume 
${\bf u}\in\bZ[\la]^n\subseteq (\cO_K)^n$, where $\cO_K$ is the ring of integers of $K$.  
Since $h_A$ is irreducible,  the absolute Galois group 
$\Gal(\overline{\bQ}/\bQ)$ acts transitively on the set of all eigenvalues of $A$, {\it i.e.}, there exist
$\sg_1,\ldots,\sg_n\in \Gal(\overline{\bQ}/\bQ)$, $\sg_1=\id$, such that $A=M\La M^{-1}$, where
\bbe\label{eq:lm0}
\La=\diag\left(\begin{matrix} \sg_1(\la)& \ldots & \sg_n(\la)\end{matrix}\right),\quad M=\left(\begin{matrix} \sg_1({\bf u}) & \ldots & \sg_n({\bf u}\end{matrix})\right)
\ee
with each $\sg_i({\bf u})$ written as a column, $i\in\{1,\ldots,n\}$.  By \cite[Proposition 4.1]{s3}, for any 
$T\in\End(G_A)$ there exists $x\in\cO_{K,\la}$ such that  
\bbe\label{eq:tx0}
T=T(x)=MXM^{-1},\quad X=\diag\left(\begin{matrix} \sg_1(x)&\ldots &\sg_n(x)\end{matrix}\right).
\ee 
Here, $x\in\cO_{K,\la}$ if and only if $x\in K$ and there exists $\la^i$,
$i\in\bN\cup\{0\}$, such that $\la^ix\in\cO_K$. Note that $\bZ[A,A^{-1}]\subseteq \End(G_A)$ and hence $\End(G_A)$ is a $\bZ[A,A^{-1}]$-module. 
Note that $\bZ[\la^{\pm 1}]=\bZ[\la^{-1}]$, since $\la$ is an algebraic integer. 
Thus, $\End(G_A)$ is a $\bZ[\la^{-1}]$-module, where for $T\in\End(G_A)$, the action of $\la$ (respectively, of $A$) is
given by $\la\cdot T(x)=T(\la x)$ (respectively, $A\cdot T=AT$). Note that 
$A$ and $T$ commute.
By \cite[Corollary 4.3]{s3}, $\End(G_A)$ is a finitely-generated $\bZ[\la^{-1}]$-module and we will describe how to compute its finitely many generators as such. Let $T\in\End(G_A)$ be arbitrary, so that $T=T(x)$ by \eqref{eq:tx0}. Since we are looking for generators of $\End(G_A)$ as a 
$\bZ[\la^{-1}]$-module, without loss of generality, we can assume that $x$ itself is an algebraic integer, {\it i.e.}, $x\in\cO_K$. 
Moreover, since $T(G_A)\subseteq G_A$, there exists $k\in\bN$ such that $A^kT\in\M_n(\bZ)$. Thus, without loss of generality, we can assume that $T\in\M_n(\bZ)$. Thus, it is enough to find all $x\in\cO_K$ such that $T(x)\in\M_n(\bZ)$. 
Clearly, if $x\in\bZ[\la]$, then $T(x)\in\bZ[A]\subset \M_n(\bZ)$. 
Therefore, one only needs to consider a finite set 
$\cG\subset\cO_K$ of representatives of the quotient $\cO_K/\bZ[\la]$, {\it i.e.}, whether 
$T(x)\in \M_n(\bZ)$ for each 
 $x\in\cG$. Thus, all generators of $\End(G_A)$ as a $\bZ[\la^{-1}]$-module can be found in finitely many steps, provided that each step in the proof is computable. 
Indeed, algorithms exist to compute an integral basis of $\cO_K$, starting from $1,\la,\ldots,\la^{n-1}$,  
as well as the index $l=[\cO_K:\bZ[\la]]$ of $\bZ[\la]$ in $\cO_K$ and set $\cG$ ({\it e.g.}, Algorithm $A_6$ \cite{gr}). Thus, each $x\in\cG$ can be written as $x=\sum_{i=0}^{n-1}x_{i}\la^i$, $x_{0},\ldots,x_{n-1}\in\bQ$, and 
$T(x)=\sum_{i=0}^{n-1}x_{i}A^i$. 
This completes a proof that $\End(G_A)$ is computable. 

\sbr

We now make a few observations that will be used later in the proof. 
For $x\in\cO_K$ and $l=[\cO_K:\bZ[\la]]$, by \eqref{eq:lm0} and \eqref{eq:tx0}, we have that $l\cdot T(x)\in\bZ[A]$.  
Furthermore, the discriminant of the lattice in $\cO_K$ generated by coordinates of ${\bf u}\in (\cO_K)^n$ gives $m=(\det M)^2\in\bZ$. 
Thus, $m\cdot T(x)\in\M_n(\bZ)$. Note that $A$ and $T$ commute, so that $T(G_A)\subseteq G_A$ if and only if there exists $j\in\bN\cup\{0\}$ such that $A^jT\in\M_n(\bZ)$ if and only if for each prime $p\in\bN$ there exists $j=j(p)\in\bN\cup\{0\}$ such that $A^jT\in\M_n(\bZ_{(p)})$, where 
$$
\bZ_{(p)}=\left.\left\{\frac{q}{r}\in\bQ \,\, \right\vert \, q,r\in\bZ,\,\,r\ne 0,\,\,(p,r)=1\right\},
$$
a subring of $\bQ$. Putting it all together, for $T=T(x)$ with $x\in\cO_K$, we have that $T\in\End(G_A)$ if and only if for each prime 
$p\in\bN$ dividing both $l$ and $m$ ({\it i.e.}, $p$ divides the greatest common divisor $(l,m)$ of $l$ and $m$) there 
exists $j=j(p)\in\bN\cup\{0\}$ such that $A^jT\in\M_n(\bZ_{(p)})$. 
It is enough to check $j=j(p)$ bounded by the order of the image of the set 
$\{\la^i\,\vert\,i\in\bN\cup\{0\}\}$ in the finite ring $\cO_K/(p^{g_p}\cO_K)$, $g_p={\min\{\val_pl,\val_pm\}}$. In particular, if
$l$ and $m$ are coprime, then $\End(G_A)\cong\cO_{K,\la}$ via $T(x)\mapsto x$.

\sbr

We now show computability of $\Aut(G_A)$. By \cite[Corollary 4.2]{s3}, $\Aut(G_A)$ is a finitely-generated  abelian group. 
Denote 
$$
\cO^{\times}_{K,\la}=\{x\in K-\{0\}\,\vert\, \val_{\fp}(x)=0\text{ for any prime ideal }\fp\text{ of }\cO_K\text{ not dividing }\la\}.
$$
In other words, $x\in K-\{0\}$ belongs to $\cO^{\times}_{K,\la}$ if and only if there are only prime ideals dividing $\la$
in the prime decomposition of ideal $(x)=x\cO_K$. By \cite[Proposition 4.1]{s3}, if $T(x)\in\Aut(G_A)$, then $x\in\cO^{\times}_{K,\la}$. The converse does not hold in general. The group $\cO^{\times}_{K,\la}$ is an example of a group of $S$-units in a number field $K$, where $S$ is a finite set of prime ideals of $\cO_K$. It is well-known that those groups are finitely generated and there are algorithms to determine generating sets. The subgroup 
$$
Y=\{x\in \cO^{\times}_{K,\la}\,\vert\,T(x)\in\Aut(G_A)\}\cong\Aut(G_A)
$$ 
is finitely generated, too. Assuming the group of units $\cO_K^{\times}$ of ring $\cO_K$ is known, here is one way to find generators of $\cO^{\times}_{K,\la}$. 
Clearly, $\la\in Y$, since 
$T(\la)=A$. Let 
$T(x)\in\Aut(G_A)$, $x\in\cO_{K,\la}^{\times}$, so that there exists a power of $\la$ such that 
$\la^jx\in\cO_K$, $j\in\bN\cup\{0\}$. Without loss of generality, we can assume that $x\in\cO_K\cap\cO^{\times}_{K,\la}$. 
Then there is 
$i\in\bN\cup\{0\}$ and $y\in\cO_K-\{0\}$ such that $\frac{\la^i}{x}=y$, equivalently, $xy=\la^i$. We now decompose 
the ideal of $\cO_K$ generated by $\la$ into (distinct) prime ideals: $(\la)=\la\cO_K=\fp_1^{i_1}\cdots\fp_k^{i_k}$, where
$i_1,\ldots,i_k\in\bN$, $\fp_1,\ldots,\fp_k$ are distinct prime ideals of $\cO_K$. Let $l_i=\abs{\fp_i}$, $l_i\in\bN$, be the order of $\fp_i$ in the class group of $K$, {\it i.e.}, $\fp_i^{l_i}=(z_i)$ is principal, $z_i\in\cO_K$, $i\in\{1,\ldots,k\}$.
From the uniqueness of prime decompositions and $xy=\la^i$, we have that $(x)=\fp_1^{j_1}\cdots\fp_k^{j_k}$,
$j_1,\ldots,j_k\in\bN\cup\{0\}$. Thus, the product $\fp_1^{j_1}\cdots\fp_k^{j_k}$ is principal. 
It is enough to find all principal products 
of this form among finitely many choices when 
$0\leq j_i<l_i$, $i\in\{1,\ldots,k\}$. For each such principal product we fix  a generator. We denote the (finite) set of such generators by $\cG\subset\cO_K$. Thus, 
$x$ can be written as a product 
\bbe\label{eq:xx0}
x=ugz_1^{a_1}\cdots z_k^{a_k},\quad a_1,\ldots,a_k\in\bN\cup\{0\},\quad u\in\cO_K^{\times},\quad g\in\cG.
\ee
It is well-known that $\cO_K^{\times}$ is finitely generated and there are algorithms to find its finite generating sets (see {\it e.g.}, \cite[Algorithm $A_8$]{gr}). 

\sbr

We now assume that a finite generating set $\{f_1,\ldots,f_r\}$ of $\cO_{K,\la}^{\times}$ is known. 
For each
$f_i$, there exist $s_i,t_i\in\bN\cup\{0\}$ such that $\la^{s_i}f_i,\la^{t_i}f_i^{-1}\in\cO_K$, $i\in\{1,\ldots,r\}$. 
For any finite subset $I\subseteq\{1,\ldots,r\}$ and its complement $I^c$ in $\{1,\ldots,r\}$, let 
$$
x=\prod_{i\in I}f_i^{a_i}\times\prod_{i\in I^c}(f_i^{-1})^{a_i},\quad a_1,\ldots,a_r\in\bN\cup\{0\},
$$
an arbitrary element of $\cO_{K,\la}^{\times}$. 
Then 
\begin{eqnarray*}
 \la^{\al}x & = & \prod_{i\in I}(\la^{s_i}f_i)^{a_i}\times\prod_{i\in I^c}(\la^{t_i}f_i^{-1})^{a_i}\in\cO_K, \quad \al=\sum_{i\in I} s_ia_i+\sum_{i\in I^c} t_ia_i,\\
 \la^{\be}x^{-1} & = & \prod_{i\in I}(\la^{t_i}f_i^{-1})^{a_i}\times\prod_{i\in I^c}(\la^{s_i}f_i)^{a_i}\in\cO_K, \quad \be=\sum_{i\in I} t_ia_i+\sum_{i\in I^c} s_ia_i.
\end{eqnarray*}
Denote $N=\prod_{p\,\vert(l,m)}p^{g_p}$, where $g_p={\min\{\val_pl,\val_pm\}}$ as above. It suffices to check whether $T(\la^{\al}x)\in\End(G_A)$ (equivalently, 
$T(x)\in\End(G_A)$) for finitely many values of $\{a_1,\ldots,a_r\}$. Indeed, it is enough to consider each
$a_i$, $i\in I$ (resp., $i\in I^c$), bounded by the order of the image of the set $\{(\la^{s_i}f_i)^{b}\}_{b\in\bN\cup\{0\}}$ (resp., $\{(\la^{t_i}f_i^{-1})^{b}\}$)  in the finite ring $\cO_K/N\cO_K$. (As mentioned above, we do not address the effectiveness of the algorithm in this paper. However, to simplify computations -- similarly to the case of $\End(G_A)$ discussed earlier -- one could bound the degrees $a_i$'s by the corresponding orders in the quotient rings $\cO_K/(p^{g_p}\cO_K)$, where $p$ runs over the primes dividing $(l,m)$.) The results are 
congruences for powers $a_1,\ldots,a_r$. We repeat with $x^{-1}$, {\it i.e.}, whether 
$T(\la^{\be}x^{-1})\in\End(G_A)$ (equivalently, 
$T(x^{-1})\in\End(G_A)$) for finitely many values of $\{a_1,\ldots,a_r\}$, where each 
$a_i$, $i\in I$ (resp., $i\in I^c$), is bounded by the order of the image of the set 
$\{(\la^{t_i}f_i^{-1})^{b}\}_{b\in\bN\cup\{0\}}$ (resp., $\{(\la^{s_i}f_i)^{b}\}$)  in $\cO_K/N\cO_K$. Note that if some $f_i$ is a unit in $\cO_K$, {\it i.e.}, 
$f_i\in\cO_K^{\times}$, then $s_i=t_i=0$ and the image of $\{1,f_i^{\pm 1},f_i^{\pm 2},\ldots\}$ in $\cO_K/N\cO_K$ is a (finite) group.
Thus, to check whether $ T(\la^{\al}x), T(\la^{\be}x)\in\End(G_A)$, it is enough to assume that 
$a_i$ is bounded by the order of $f_i$ in $\cO_K/N\cO_K$.
The final step involves solving, for each power $a_i$, $i\in\{1,\ldots,r\}$, a system of at most two congruences when working over the ring $\cO_K/N\cO_K$, and potentially more when considering the rings $\cO_K/(p^{g_p}\cO_K)$. Solving a system of congruences is computable, 
either by applying the Chinese Remainder Theorem when the moduli are coprime, or by using more general methods such as the extended Euclidean algorithm and verifying compatibility when the moduli are not coprime.

\sbr

We used the following steps above, all of which are computable. In particular, decomposing the ideal $(\la)$ into prime ideals (see {\it e.g.}, \cite[Algorithm $A_{10}$]{gr}), deciding if a product of prime ideals is principal and if yes, producing a generator (see {\it e.g.}, \cite[Algorithm 6.5.10]{cohen})), and computing the class number of $K$ (see {\it e.g.}, \cite[Algorithm $A_{12}$]{gr}). 
\end{proof}

\begin{rem}\label{rem:centr} 
It follows from the above that under the conditions that the characteristic polynomial of $A$ is irreducible and there exists a prime $p\in\bN$ such that $(n,t_p)=1$, as a $\bZ[A,A^{-1}]$-module, $\End(G_A)$ is generated by elements of the centralizer of $A$ in $\M_n(\bZ)$, which gives another way to compute $\End(G_A)$ as a $\bZ[A,A^{-1}]$-module. However, it is not clear how to describe $\Aut(G_A)$ (equivalently, 
the unit group of  $\End(G_A)$), using this approach.   
\end{rem}

The algorithms mentioned in the proof of Theorem \ref{th:endcomp} are implemented in \cite{sage}, and we use it together with \cite{db} to produce examples. We demonstrate with two examples ($3$-dimensional and $5$-dimensional). One can find $2$-dimensional examples in \cite{s3}.


\begin{example}\label{ex:30}
Let 
$$
A=\left(\begin{matrix}
0 & 1 & 0 \\
0 & 0 & 1 \\
-59 & 15 & 1 \\
\end{matrix}
\right),\quad  
B=\left(\begin{matrix}
0 & 1 & 0 \\
7 & -4 & 6 \\
-4 & -2 & 5 \\
\end{matrix}
\right).
$$
Both matrices have characteristic polynomial $h=t^3-t^2-15t+59$, irreducible in $\bZ[t]$, $59$ is a prime, and $t_{59}=1$, so that the hypotheses of Theorem 
\ref{th:endcomp} hold. 
For a root $\la$ of $h$, 
it is known that $\cO_K=\bZ[1,\la,\frac{1}{2}\la^2-\frac{1}{2}]$ and $[\cO_K:\bZ[\la]]=2$ \cite{db}. Thus, we can take 
$x=\frac{1}{2}\la^2-\frac{1}{2}$ as a non-trivial representative for $\cO_K/\bZ[\la]$. 
One calculates that $\frac{1}{2}A^2-\frac{1}{2}A^0$ is not integer, but 
$\frac{1}{2}B^2-\frac{1}{2}B^0$ is integer. Thus,
$\End(G_A)=\bZ[A^{-1}]$ and $\End(G_B)\cong\cO_{K,\la}$ via $T(x)\in\End(G_B)\mapsto x\in\cO_{K,\la}$.
Explicitly, 
$\End(G_B)$ is generated by $T_0=\frac{1}{2}B^2-\frac{1}{2}B^0$ as a $\bZ[\la^{-1}]$-module, {\it i.e.},
$$
\End(G_B)=\left\{\sum_i b_iB^{m_i}+\sum_j c_jB^{n_j}T_0\,\,\Big\vert \,\, \forall \, b_i,m_i,c_j,n_j\in\bZ,\, i,j\in\bN  \right\},
$$
where each sum has finitely many non-zero terms. Now, one can compute $\Aut(G_A)$, $\Aut(G_B)$ as unit groups of 
$\End(G_A)$, $\End(G_B)$, respectively. For example, immediately, $\Aut(G_B)\cong\cO_{K,\la}^{\times}$. In general, it is not clear whether the computability of $\End(G_A)$ implies the computability of $\Aut(G_A)$. Thus, in Example \ref{ex:3'} below,
we compute $\Aut(G_A)$ 
according to the procedure described in Theorem 
\ref{th:endcomp}. 
\end{example}

\begin{example}\label{ex:50}
Let $h=t^5-2t^4-t^3+5t^2+9$, irreducible in $\bZ[t]$, $t_{3}=2$, and $\caR=\bZ[3^{-1}]$. 
It is known that $\cO_K=\bZ[1,\la,\la^2,\la^3,g]$, $g=g(\la)=\frac{1}{6}\la^4+\frac{1}{6}\la^3+\frac{1}{3}\la^2-\frac{1}{6}\la-\frac{1}{2}$, and 
we can take 
$g$ as a generator of $\cO_K/\bZ[\la]$, $[\cO_K:\bZ[\la]]=6$, $\la$ is a root of $h$ \cite{db}.  
Let
$$
A=\left(\begin{matrix}
0 & 1 & 0 & 0 & 0 \\
0 & 0 & 1 & 0 & 0 \\
0 & 0 & 0 & 1 & 0 \\
0 & 0 & 0 & 0 & 1 \\
-9 & 0 & -5 & 1 & 2
\end{matrix}
\right),\quad 
B=\left(\begin{matrix}
-1 & 2 & 0 &  0 & 0 \\
-1 &  2 & 0 & 0 & 1 \\
 1 & 1 & 0 & 0 & 0 \\
-2 & 1 & -1 & 1 & 0 \\
-4 & 2 & 1 & 2 & 0 
\end{matrix}\right).
$$
Both matrices have characteristic polynomial $h$, so that the hypotheses of Theorem \ref{th:endcomp} hold. As in Example 
\ref{ex:30} above, 
one can check that $iT(g)=i(\frac{1}{6}A^4+\frac{1}{6}A^3+\frac{1}{3}A^2-\frac{1}{6}A-\frac{1}{2}A^0)$ is not integer for any 
$i\in\{1,\ldots,5\}$. 
Thus, 
$\End(G_A)=\bZ[A^{-1}]$.  
On the other hand, $g(B)$ has integer entries, hence 
$\End(G_B)\cong\cO_{K,\la}$  via $T(x)\in\End(G_B)\mapsto x\in\cO_{K,\la}$. 
Note that, immediately, $\Aut(G_B)\cong\cO_{K,\la}^{\times}$ as the unit group of $\cO_{K,\la}$. 
In Example \ref{ex:5'} below, we compute $\Aut(G_A)$ 
independently of
the corresponding endomorphism ring according to the procedure described in Theorem 
\ref{th:endcomp}. 
%
\end{example}

\begin{rem}
In the second example, 
we found matrices $A$, $B$ from ideals generating the class group of $\cO_K$.
It is known that $\GL_n(\bZ)$-conjugacy classes of matrices with characteristic polynomial $h$ are in one-to-one correspondence with $\bZ[\la]$-ideal classes of fractional ideals of $K$, which is a finite set. 
Clearly, an $\cO_K$-ideal is a $\bZ[\la]$-ideal, so that the number of $\bZ[\la]$-ideal classes is bigger than the number of 
$\cO_K$-ideal classes. 
If one knows generators of
all $\bZ[\la]$-ideal classes of $K$,  
then one can describe $\End(G_A)$ as a $\bZ[\la^{-1}]$-module for any $A\in\M_n(\bZ)$ with characteristic polynomial $h$. 
\end{rem}

\subsection{$\bZ^n$-odometers} 
Theorem \ref{th:endcomp} has applications to $\bZ^n$-odometers. Namely, it provides a way to compute its endomorphism rings 
under the assumptions of the theorem. In \cite[Remark 7.6]{s3}, we already show that the endomorphism rings are computable, 
using the fact that the centralizer of an integer matrix is computable. We now present a more direct approach to examining its computability.

\sbr

Recall that $\bZ^n$-odometer is a dynamical system 
consisting of a topological space $X$ and an action of the group $\bZ^n$ on $X$ (by homeomorphisms). 
 For the groups (under addition)
$$
G_i=\left.\left\{A^{i}{\bf x}\,\right\vert\, {\bf x}\in\bZ^n\right\},\quad i\in\bN,
$$
consider a decreasing sequence 
$$
G=\bZ^n\supseteq G_1\supseteq G_2\supseteq\cdots
$$
and the natural maps $\pi_i:G/G_{i+1}\rar G/G_{i}$, $i\in\bN$. The associated $\bZ^n$-odometer is  the inverse limit 
\bbe\label{eq:xg}
X_A=\lim_{\longleftarrow} \left(G/G_{i}\right)
\ee
together with the natural action of $\bZ^n$. 
In \cite{cp}, the authors study the linear representation group of 
$X_A$ denoted by $\vec{N}(X_A)$. By \cite[Lemma 2.6]{cp}, $\vec{N}(X_A)$ consists of $T\in\GL_n(\bZ)$ ({\it i.e.}, $T\in\M_n(\bZ)$ with $\det T=\pm 1$) such that for any $m\in\bN\cup\{0\}$ there exists $k_m\in\bN\cup\{0\}$ with
\bbe\label{eq:linrepgr}
A^{-m}TA^{k_m}\in\M_n(\bZ).
\ee
By taking the transpose of the condition, one can see that it is
 equivalent to the condition that $T^t$ defines an endomorphism of $G_{A^t}$. 
\begin{lem}[Lemma 7.1 \cite{s3}]\label{lem:nvec}
$T\in\vec{N}(X_A)$ if and only if $T^t\in\End(G_{A^t})\cap\GL_n(\bZ)$.
\end{lem}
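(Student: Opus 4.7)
The plan is to observe that the defining condition for $\vec{N}(X_A)$ translates directly, via transposition, into the condition for $T^t$ to preserve $G_{A^t}$, so the lemma reduces to checking that the two descriptions really match.

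First I would record the elementary observation that transposition interchanges the order of products, so for any integers $m, k \geq 0$,
\[
\left(A^{-m} T A^{k_m}\right)^t = (A^t)^{k_m}\, T^t\, (A^t)^{-m},
\]
and therefore $A^{-m}TA^{k_m} \in \M_n(\bZ)$ if and only if $(A^t)^{k_m} T^t (A^t)^{-m} \in \M_n(\bZ)$. Also $T \in \GL_n(\bZ)$ iff $T^t \in \GL_n(\bZ)$.

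Next I would rewrite the group $G_{A^t}$ in a form convenient for testing the endomorphism condition. Because $(A^t)^{k}\bZ^n \subseteq \bZ^n$ whenever $k \geq 0$, the union in the definition of $G_{A^t}$ collapses to
\[
G_{A^t} = \bigcup_{m \geq 0} (A^t)^{-m}\, \bZ^n.
\]
A matrix $S \in \M_n(\bQ)$ defines an endomorphism of $G_{A^t}$ precisely when $S\cdot (A^t)^{-m}\bZ^n \subseteq G_{A^t}$ for every $m \geq 0$; equivalently, for each such $m$ there exists $k_m \geq 0$ with $(A^t)^{k_m} S (A^t)^{-m}\bZ^n \subseteq \bZ^n$, i.e.\ $(A^t)^{k_m} S (A^t)^{-m} \in \M_n(\bZ)$.

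Applying this with $S = T^t$ and combining with the first step, I see that $T \in \vec{N}(X_A)$ (which by definition requires $T \in \GL_n(\bZ)$ and the existence of the exponents $k_m$) is logically the same condition as $T^t \in \End(G_{A^t}) \cap \GL_n(\bZ)$. The main potential pitfall is keeping the quantifier structure straight, in particular making sure that the reformulation of $\End(G_{A^t})$ really only needs to be checked on the generating family $\{(A^t)^{-m}\bZ^n\}_{m \geq 0}$; once that is in place, the lemma follows by a direct comparison of the two conditions, with no further analytic or number-theoretic input.
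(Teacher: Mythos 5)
Your proof is correct and follows essentially the same route the paper indicates: transpose the defining condition \eqref{eq:linrepgr} and compare it with the condition for $T^t$ to stabilize $G_{A^t}$, using the observation that $G_{A^t}=\bigcup_{m\geq 0}(A^t)^{-m}\bZ^n$ since $A^t$ has integer entries. You have simply spelled out the details that the paper compresses into the sentence ``by taking the transpose of the condition, one can see that it is equivalent to the condition that $T^t$ defines an endomorphism of $G_{A^t}$.''

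One small point worth making explicit, since you flag the quantifier structure as a possible pitfall: the step from ``$T^t(A^t)^{-m}\bZ^n\subseteq G_{A^t}$'' to ``there exists a \emph{single} $k_m\geq 0$ with $(A^t)^{k_m}T^t(A^t)^{-m}\in\M_n(\bZ)$'' requires a brief uniformization. A priori each generator $e_i$ of $\bZ^n$ gives its own exponent $k_{m,i}$; taking $k_m=\max_i k_{m,i}$ works because $A^t\in\M_n(\bZ)$, so applying extra powers of $A^t$ keeps you in $\M_n(\bZ)$. This is implicit in your sentence but deserves a word. With that noted, the argument is complete and matches the paper's intended proof.
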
 

We now apply our results on computability of $\End(G_{A^t})$.

\begin{thm}\label{th:endcompodom}
Let $A\in\M_n(\bZ)$ be non-singular. Then elements of $\vec{N}(X_A)$ are computable. 
Assume that characteristic polynomial $h_A\in\bZ[t]$ of $A$ is irreducible and there exists $p\in\bN$ such that $(n,t_p)=1$. 
Then $\vec{N}(X_A)$ is computable.
\end{thm}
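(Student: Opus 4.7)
The plan is to deduce both assertions from Lemma \ref{lem:nvec}, observing that $A^t$ has the same characteristic polynomial $h_A$ as $A$ and so satisfies all the hypotheses placed on $A$.

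For the first assertion, given a candidate $T\in\GL_n(\bZ)$ I would first check $\det T=\pm 1$ (trivial) and then invoke Lemma \ref{lem:nvec} to reduce testing $T\in\vec{N}(X_A)$ to testing whether $T^t$ defines a homomorphism $G_{A^t}\rar G_{A^t}$. The latter is decidable by Proposition \ref{pr:decide} applied with both source and target equal to $G_{A^t}$, which already gives computability of the elements of $\vec{N}(X_A)$ for arbitrary non-singular $A$.

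For the second assertion, Lemma \ref{lem:nvec} gives $\vec{N}(X_A)^t=\End(G_{A^t})\cap\GL_n(\bZ)$. The first step would be to observe that under our hypotheses
$$
\End(G_{A^t})\cap\GL_n(\bZ)=\Aut(G_{A^t})\cap\GL_n(\bZ).
$$
Indeed, if $T\in\End(G_{A^t})\cap\GL_n(\bZ)$, then by \cite[Proposition 4.1]{s3} (already invoked in the proof of Theorem \ref{th:endcomp}) $T$ commutes with $A^t$; hence $T^{-1}\in\M_n(\bZ)$ also commutes with $A^t$ and therefore preserves $G_{A^t}$, so $T\in\Aut(G_{A^t})$. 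I would then apply Theorem \ref{th:endcomp} to $A^t$ to produce $\Aut(G_{A^t})$ explicitly as a finitely generated abelian group via the isomorphism $T(x)\leftrightarrow x$, with computable generators $y_1,\ldots,y_s\in\cO^{\times}_{K,\la}$.

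The remaining task is to cut out the subgroup $\vec{N}(X_A)^t$ of $\Aut(G_{A^t})$ by two further conditions on a general element $y=y_1^{a_1}\cdots y_s^{a_s}$: the integrality condition $T(y)\in\M_n(\bZ)$, and the determinant condition $\det T(y)=N_{K/\bQ}(y)=\pm 1$. The first is of the same type already analyzed in the proof of Theorem \ref{th:endcomp}; it depends only on the image of $y$ in a finite ring $\cO_K/N\cO_K$ (with $N=\prod_{p\mid(l,m)}p^{g_p}$), yielding finitely many congruences on $(a_1,\ldots,a_s)\in\bZ^s$ obtained from the orders of the images of the $y_i$. The second is multiplicative and yields a finite system of linear equations $\sum_i a_i\val_p(N(y_i))=0$ indexed by the primes $p$ dividing some $N(y_i)$. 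Intersecting these two subgroups of $\bZ^s$ is routine via Smith normal form combined with the Chinese Remainder Theorem, and produces an explicit finite generating set of $\vec{N}(X_A)^t$ and hence of $\vec{N}(X_A)$. The main delicacy is the identification $\End(G_{A^t})\cap\GL_n(\bZ)=\Aut(G_{A^t})\cap\GL_n(\bZ)$ above, which rests on the commutation provided by \cite[Proposition 4.1]{s3}; once this is in place, the remainder is linear algebra over $\bZ$ on top of the generating data already produced by Theorem \ref{th:endcomp}.
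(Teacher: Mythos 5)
For the first assertion your argument matches the paper's: Lemma~\ref{lem:nvec} together with Proposition~\ref{pr:decide}.

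For the second assertion the paper appeals directly to Proposition 7.4(2) of \cite{s3}, which states that $T\in\vec{N}(X_A)$ if and only if $T=T(x)$ for some $x\in\cO_K^{\times}$ with $T(x)\in\M_n(\bZ)$; the computation then runs over $\cO_K^{\times}$ from the start. Your route --- proving $\End(G_{A^t})\cap\GL_n(\bZ)=\Aut(G_{A^t})\cap\GL_n(\bZ)$ via the commutation from \cite[Proposition 4.1]{s3}, then cutting a subgroup out of the group $Y\cong\Aut(G_{A^t})$ already computed in Theorem~\ref{th:endcomp} --- is a legitimate alternative in outline, but the cutting step has a gap. The generators $y_1,\ldots,y_s$ of $Y$ lie in $\cO_{K,\la}^{\times}$, not in $\cO_K$: $\la$ itself is always one of them (since $T(\la)=A^t$) and is not a unit of $\cO_K$ once $\det A\ne\pm 1$. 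Consequently ``the image of $y=y_1^{a_1}\cdots y_s^{a_s}$ in $\cO_K/N\cO_K$'' is undefined for a general exponent vector, and the reduction-mod-$N$ criterion cannot be invoked as you state. The determinant condition $N_{K/\bQ}(y)=\pm 1$ does not repair this: if $(\la)=\fp_1\fp_2$ with primes of equal norm, an element with $\val_{\fp_1}(y)=1$, $\val_{\fp_2}(y)=-1$, and all other valuations zero has unit norm but is not an algebraic integer. The fix is to first impose the stronger linear conditions $\val_{\fp}(y)=0$ for every prime $\fp$ dividing $\la$, cutting $Y$ down to $Y\cap\cO_K^{\times}$; on this subgroup every element lies in $\cO_K$, the determinant is automatically $\pm 1$, and reduction modulo $N\cO_K$ legitimately controls whether $T(y)\in\M_n(\bZ)$ (because $N=(l,m)$ and $(l,m)\cdot T(w)\in\M_n(\bZ)$ for all $w\in\cO_K$, by B\'ezout from $l\,T(w)\in\bZ[A^t]$ and $m\,T(w)\in\M_n(\bZ)$). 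What this recovers is exactly $\{x\in\cO_K^{\times}:T(x)\in\M_n(\bZ)\}$, i.e., the same group that \cite[Proposition 7.4(2)]{s3} hands you directly.
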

\begin{proof}
The computability of elements of $\vec{N}(X_A)$ follows from Proposition \ref{pr:decide} together with Lemma \ref{lem:nvec}.
The computability of $\vec{N}(X_A)$ follows from Theorem \ref{th:endcomp} together with Lemma \ref{lem:nvec}. More precisely, 
in the notation of the proof of 
Theorem \ref{th:endcomp}, 
by \cite[Proposition 7.4 (2)]{s3}, for any $T\in\vec{N}(X_A)$ there exists $x\in\cO_{K}^{\times}$ such that 
$T(x)=MXM^{-1}$. Conversely, if $T=T(x)$ for some $x\in\cO_{K}^{\times}$ and $T\in\M_n(\bZ)$, then $T\in \vec{N}(X_A)$.
(Note that it is always the case that given $x\in\cO_{K}^{\times}$, $T=T(x)\in\GL_n(\bQ)$ and $\det T=\pm 1$.) 
%
%
\end{proof}

We show the process on two previous Examples  \ref{ex:30} and \ref{ex:50} above, and we also compute $\Aut(G_A)$.
\begin{example}\label{ex:3'}
As in Example  \ref{ex:30} above,  
let $h=t^3-t^2-15t+59$, irreducible in $\bZ[t]$, $59$ is a prime, $t_{59}=1$. 
It is known that $\cO_K=\bZ[1,\la,\frac{1}{2}\la^2-\frac{1}{2}]$, 
and the unit group is generated by $-1$ and a (fundamental) unit $f\in\bZ[\la]$, so that $\cO_K^{\times}\subset\bZ[\la]$  \cite{db}. Thus, $\vec{N}(X_A)\cong\cO_K^{\times}$ via $T(x)\in\vec{N}(X_A)\mapsto x\in\cO_K^{\times}$ for any $A\in\M_3(\bZ)$ with characteristic polynomial $h$. Also, $(\la)$ is prime. Thus, in the notation of Theorem \ref{th:endcomp}, $x=u\la^i$, $y=u^{-1}\la^j$, where $u\in\cO_K^{\times}$, $i,j\in\bN\cup\{0\}$. 
Thus, $x,y\in\bZ[\la]$ and hence 
$$
\Aut(G_A)=\Aut(G_B)\cong\cO_{K,\la}^{\times},\quad \cO_{K,\la}^{\times}=\cO_K^{\times}\times\{\la^i\,\vert\,i\in\bZ\}\cong(\bZ/2\bZ)\times\bZ^2,
$$
 via $T(x)\in\Aut(G_A)\mapsto x\in\cO_{K,\la}^{\times}$.
\end{example}

\begin{example}\label{ex:5'}
As in Example \ref{ex:50} above, 
let $h=t^5-2t^4-t^3+5t^2+9$, irreducible in $\bZ[t]$ and $t_{3}=2$. 
It is known that 
$$
\cO_K=\bZ\left[1,\la,\la^2,\la^3,\frac{1}{6}\la^4+\frac{1}{6}\la^3+\frac{1}{3}\la^2-\frac{1}{6}\la-\frac{1}{2}\right].
$$ 
Let
$$
A=\left(\begin{matrix}
0 & 1 & 0 & 0 & 0 \\
0 & 0 & 1 & 0 & 0 \\
0 & 0 & 0 & 1 & 0 \\
0 & 0 & 0 & 0 & 1 \\
-9 & 0 & -5 & 1 & 2
\end{matrix}
\right),\quad 
B=\left(\begin{matrix}
-1 & 2 & 0 &  0 & 0 \\
-1 &  2 & 0 & 0 & 1 \\
 1 & 1 & 0 & 0 & 0 \\
-2 & 1 & -1 & 1 & 0 \\
-4 & 2 & 1 & 2 & 0 
\end{matrix}\right).
$$
Both matrices have characteristic polynomial $h$. First, we compute $\Aut(G_A)$. 
Here $(\la)=\fp_1\fp_2$.
The group $\cO_{K,\la}^{\times}$ is generated by five generators 
\begin{eqnarray*}
&& -1,\,\, \la,\\
&& f_1=f_1(\la)=\frac{2}{3}\la^4 - \frac{1}{3}\la^3 - \frac{11}{3}\la^2 + \frac{10}{3}\la + 9\in\cO_K^{\times}, \\
 && f_2=f_2(\la)=2\la^4 - 4\la^3 - 4\la^2 + 14\la + 5\in\cO_K^{\times}, \\
&& f_3=f_3(\la)=\frac{5}{6}\la^4 + \frac{29}{6}\la^3 + \frac{23}{3}\la^2 + \frac{43}{6}\la + \frac{13}{2}\in\cO_K,
\end{eqnarray*}
and 
$\la^3f_3^{-1}\in\cO_K$ \cite{sage}. In the notation of the proof of Theorem \ref{th:endcomp}, $s_1=t_1=s_2=t_2=s_3=0$, $t_3=3$. 
Denote $T_i=f_i(A)$, $i\in\{1,2,3\}$. 
For $A$, ${\bf u}=\left(\begin{matrix} 1& \la& \ldots & \la^4\end{matrix}\right)^t\in(\cO_K)^5$ is an eigenvector of $A$ corresponding to $\la$, the discriminant of the lattice in $\cO_K$ generated by coordinates of ${\bf u}$ gives 
$m=(\det M)^2$. We compute $m=(\det M)^2=2^4\cdot 3^3\cdot 130643$, $l=[\cO_K:\bZ[\la]]=2\cdot 3$, where $130643$ is prime. 
Then $(l,m)=6$, $g_2=g_3=1$, $N=6$. The orders of the images of the sets 
$\{(f_3)^i \}_{i\in\bN\cup\{0\}}$, $\{(\la^3f_3^{-1})^i\}_{i\in\bN\cup\{0\}}$ in $\cO_K/N\cO_K$ are both $13$, 
the orders of $f_1,f_2$ in $\cO_K/N\cO_K$ are both $6$, the order of the image of the set $\{\la^i\}_{i\in\bN\cup\{0\}}$ in $\cO_K/N\cO_K$ is $7$. Then for an arbitrary element $x\in\cO_{K,\la}^{\times}$, we have that 
$$
x=\al f_1^{a_1}f_2^{a_2}f_3^{a_3}\la^{a_4},\quad \la^{3a_3}x^{-1}=\al (f_1^{-1})^{a_1}(f_2^{-1})^{a_2}(\la^3f_3^{-1})^{a_3}\la^{-a_4},
$$
where $\al=\pm 1$ and it is enough to consider $-6< a_1,a_2< 6$, $0\leq a_3\leq 12$, $a_4=0$. To check whether 
$A^jT(x)\in\M_n(\bZ)$ (resp., $A^jT(\la^{3a_3}x^{-1})\in\M_n(\bZ)$), it is enough to consider $0\leq j\leq 6$. 
One checks that $T_1,T_2\in\Aut(G_A)$, and that
$T_3$ is not an endomorphism of $G_A$. Similarly, we check that 
$T_3^2\in\Aut(G_A)$. Thus, $\Aut(G_A)$ is a finitely generated abelian group and it is generated by 
$$
\Aut(G_A)=\{-1,A,T_1,T_2,T_3^2 \}.
$$
The approach in the proof of Proposition \ref{pr:decide} seems to produce a more complicated search, since the splitting field $L$ of $h$ has degree 
$5!=120$ with a complicated defining polynomial.

\sbr

We now compute $\vec{N}(X_A)$ and $\vec{N}(X_B)$. 
It is known that the unit group has rank $2$, the torsion has order $2$ and generated by $-1$, and 
$f_1$, 
$f_2$ are fundamental units \cite{sage}. 
One can check that $f_1(A)\not\in\M_5(\bZ)$ and $f_1^2\in\bZ[\la]$. Thus, $\vec{N}(X_A)$ is generated by 
$-I_5$ ($I_5$ is the $5\times 5$-identity matrix), $f_1^2(A)$, and $f_2(A)$. Explicitly, 
\begin{eqnarray*}
R_1&=&f_1^2(A)=14A^4 - 19A^3 - 50A^2 + 91A + 97I_5,\\ 
R_2&=&f_2(A)=2A^4 - 4A^3 - 4A^2 + 14A + 5I_5,
\end{eqnarray*}
 and 
$$
\vec{N}(X_A)=\{\pm(R_1)^j(R_2^k)\,\vert\, j,k\in\bZ\}.
$$ 
On the other hand, $f_1(B)\in\M_5(\bZ)$, hence $\vec{N}(X_B)\cong\cO_K^{\times}$. Explicitly, 
\begin{eqnarray*}
S_1&=&f_1(B)=\frac{2}{3}B^4-\frac{1}{3}B^3-\frac{11}{3}B^2+ \frac{10}{3}B + 9I_5,\\
S_2&=&f_2(B)=2B^4 - 4B^3 - 4B^2 + 14B + 5, 
\end{eqnarray*}
and
$$
\vec{N}(X_B)=\{\pm(S_1)^j(S_2^k)\,\vert\, j,k\in\bZ\}.
$$
\end{example}

\subsection{$2$-dimensional case}\label{ss:two0} 
For $n\in\bZ$, $n\ne\pm1 $, let $\rad(n)\in\bN$ be the product of all distinct prime divisors $p\in\bN$ of $n$.

\sbr 

If $n=2$, then there are three cases distinguished in \cite{s1}: \\

{\bf (a)} the characteristic polynomial $h_A\in\bZ[x]$ of $A$ is irreducible (equivalently, $A$ has no rational eigenvalues), \\

{\bf (b)} $h_A$ is reducible  (equivalently, $A$ has eigenvalues $\la_1,\la_2\in\bZ$), $\rad(\la_1)$ does not divide $\rad(\la_2)$, and $\rad(\la_2)$ does not divide $\rad(\la_1)$, \\

{\bf (c)} $h_A$ is reducible and every prime dividing one eigenvalue divides the other, {\it e.g.}, $\rad(\la_2)$ divides $\rad(\la_1)$ (denoted by 
$\rad(\la_2)\,\vert \rad(\la_1)$). \\

Case (a) is treated in Theorem \ref{th:endcomp}. 

\begin{rem}\label{rem:casebc0}
Note that if $n=2$, $h_A$ is reducible, and $\cP'\ne\emptyset$, then $\det A\ne\pm 1$, $A$ has distinct eigenvalues $\la_1,\la_2\in\bZ$, and hence $A$ is diagonalizable over $\bQ$. Moreover, there exists 
$S\in\operatorname{GL}_2(\bZ)$ such that $SAS^{-1}=M\La M^{-1}$, where
\bbe\label{eq:dim2}
\La=\left(\begin{matrix}
\la_1 & 0 \\
0 & \la_2 
\end{matrix}
\right), \quad 
M=\left(\begin{matrix}
1 & u \\
0 & v 
\end{matrix}
\right),
\,\, \la_1,\la_2,u,v\in \bZ,\,\,(u,v)=1,\,\,v\,\vert\,(\la_1- \la_2),
\ee
where $(u,v)=1$ means that $u,v$ are coprime 
\cite[Corollary A.2]{s1}. Since $S(G_{A})=G_{SAS^{-1}}$, {\it i.e.}, 
$G_A$, $G_{SAS^{-1}}$ are isomorphic, without loss of generality, we can assume that $A$ itself is upper-triangular and has the form $A=M\La M^{-1}$. Indeed, it is computable to find a matrix in $\operatorname{GL}_2(\bZ)$ that conjugates $A$ to a triangular form. 
One first diagonalizes $A$, and then multiplies an integer diagonalizing matrix on the left by an element of $\operatorname{GL}_2(\bZ)$ to transform it into upper-triangular form. This is a well-known argument (see {\it e.g.}, \cite{con1}).
\end{rem}

If $n=2$ and the characteristic polynomial $h_A\in\bZ[x]$ of $A$ is not irreducible, then Theorem \ref{th:2dimredb0} and Theorem \ref{th:2dimredc0} below describe $\End(G_A)$, $\Aut(G_A)$ completely. 

\begin{thm}[Theorem 3.8 \cite{s3}]\label{th:2dimredb0}
Assume $n=2$ and $A=M\La M^{-1}$, where $M,\La$ are given by $\eqref{eq:dim2}$. Assume case $(b)$, i.e., $\rad(\la_1)$ does not divide $\rad(\la_2)$, and $\rad(\la_2)$ does not divide $\rad(\la_1)$. Then $T\in\End(G_A)$ if and only if
\bbe\label{eq:T}
T=MXM^{-1},\quad X=\diag\left(\begin{matrix} x_1 & x_2\end{matrix}\right),
\ee
where $x_i\in\bZ[\la_i^{-1}]$, $i=1,2$, and $\frac{x_1-x_2}{v}\in\caR$. Moreover, $T\in\Aut(G_A)$ if and only if $T$ has the form \eqref{eq:T}, 
$x_i=\prod_{\text{prime }p\,\vert\rad(\la_i)}p^{k_p}$, $k_p\in\bZ$, $i=1,2$, and $\frac{x_1-x_2}{v}\in\caR$.
\end{thm}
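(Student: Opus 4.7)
The plan is to first show that every $T\in\End(G_A)$ is diagonal in the eigenbasis $M$, and then extract the stated integrality conditions from the requirement $T(G_A)\subseteq G_A$. For the diagonalization, the hypothesis of case $(b)$ gives primes $p_1,p_2\in\cP(A)$ with $p_i\mid\la_i$ but $p_i\nmid\la_{3-i}$. Fixing a prime $\fp_i$ of $\cO_L$ above $p_i$, the argument used in the proof of Proposition \ref{pr:decide} (applying \cite[Theorem 4.3]{s2}) forces $T(\cX_{A,\fp_i})\subseteq\cX_{A,\fp_i}$. In this two-dimensional setting $\cX_{A,\fp_i}$ is exactly the $\la_i$-eigenline of $A$, which in the $M$-basis is a coordinate axis; therefore $T=MXM^{-1}$ with $X=\diag(x_1,x_2)$, $x_1,x_2\in\bQ$, and a direct computation using \eqref{eq:dim2} yields
\[
T=\begin{pmatrix} x_1 & u(x_2-x_1)/v \\ 0 & x_2 \end{pmatrix},\qquad A^k=\begin{pmatrix} \la_1^k & u(\la_2^k-\la_1^k)/v \\ 0 & \la_2^k \end{pmatrix}.
\]

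Since $G_A$ is generated by the sets $A^j\bZ^2$ for $j\in\bZ$, the containment $T(G_A)\subseteq G_A$ is equivalent to $A^kTe_1,A^kTe_2\in\bZ^2$ for suitable $k\geq 0$. The first column gives $A^kTe_1=(\la_1^kx_1,0)^t$, forcing $x_1\in\bZ[\la_1^{-1}]$. The second column gives
\[
A^kTe_2=\bigl(u(\la_2^kx_2-\la_1^kx_1)/v,\;\la_2^kx_2\bigr)^t,
\]
whose bottom entry forces $x_2\in\bZ[\la_2^{-1}]$, and whose top entry, using $(u,v)=1$, requires $\la_2^kx_2\equiv\la_1^kx_1\pmod{v}$ for some $k$. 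I would then show this congruence condition, combined with $v\mid(\la_1-\la_2)$ and the already-established $x_i\in\caR$, is equivalent to $(x_1-x_2)/v\in\caR$ by a local analysis at each prime $p\mid v$: at primes $p\nmid\det A$ the condition forces $v\mid x_1-x_2$ inside $\bZ_{(p)}$ (using $\la_1\equiv\la_2\pmod p$ and the fact that $\la_1,\la_2$ are $p$-adic units), while at primes $p\mid\det A$ the condition imposes nothing beyond what is already required of $x_1,x_2$. Sufficiency follows by running the same formulas in reverse on generators $A^je_\ell$.

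For $\Aut(G_A)$, $T$ is invertible in $\End(G_A)$ iff $T^{-1}=MX^{-1}M^{-1}$ also satisfies the endomorphism conditions; applying them to both $T$ and $T^{-1}$ forces $x_i\in\bZ[\la_i^{-1}]^{\times}$, whose elements are, up to sign, of the form $\prod_{p\mid\rad(\la_i)}p^{k_p}$ with $k_p\in\bZ$, while the off-diagonal condition $(x_1-x_2)/v\in\caR$ persists unchanged. The main obstacle is the off-diagonal step: showing that the existence of some exponent $k$ with $\la_2^kx_2\equiv\la_1^kx_1\pmod v$ collapses to the compact statement $(x_1-x_2)/v\in\caR$. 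This is where the coupling of the two eigenspaces through $M$ imposes a genuine constraint, and the verification requires careful prime-by-prime analysis, distinguishing primes dividing $v$ according to whether or not they divide $\det A$.
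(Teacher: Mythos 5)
The paper states this theorem as a citation from \cite{s3} without reproducing a proof, so there is nothing internal to compare against. Judged on its own terms, your plan is correct and fits the machinery the paper does develop. The diagonalization step is valid: case $(b)$ supplies primes $p_1\mid\la_1$, $p_1\nmid\la_2$ and $p_2\mid\la_2$, $p_2\nmid\la_1$, both in $\cP(A)$, so the eigenspace-preservation step used in the proof of Proposition~\ref{pr:decide} (with $L=\bQ$ and $B=A$) gives that $T$ preserves each of the two eigenlines of $A$, hence $T=MXM^{-1}$ with $X$ diagonal. Your coordinate formulas for $T$ and $A^k$ are right, and the reduction of $T(G_A)\subseteq G_A$ to ``$A^kTe_1,A^kTe_2\in\bZ^2$ for some $k\ge 0$'' is legitimate precisely because $T$ commutes with $A$; note this uses diagonality, so it must follow that step in the write-up. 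The local analysis is sound: writing $\la_2^kx_2-\la_1^kx_1=\la_1^k(x_2-x_1)+(\la_2^k-\la_1^k)x_2$ and using $v\mid(\la_1-\la_2)$, at a prime $p\mid v$ with $p\nmid\det A$ the $(1,2)$-entry condition becomes $\val_p(x_1-x_2)\ge\val_p(v)$ independently of $k$; at a prime $p$ dividing both $v$ and $\det A$ it holds automatically for $k$ large while membership in $\caR$ imposes nothing there; and at $p\nmid v$ it is vacuous. This yields the equivalence with $(x_1-x_2)/v\in\caR$ in both directions, so both necessity and sufficiency are covered.

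For $\Aut(G_A)$, applying the endomorphism criterion to both $T$ and $T^{-1}=MX^{-1}M^{-1}$ forces $x_i\in\bZ[\la_i^{-1}]^{\times}$, and since $(x_1^{-1}-x_2^{-1})/v=-(x_1x_2)^{-1}(x_1-x_2)/v$ with $x_1x_2\in\caR^{\times}$, the off-diagonal condition for $T^{-1}$ follows from that for $T$, exactly as you say. You also correctly observe that $\bZ[\la_i^{-1}]^{\times}=\{\pm\prod_{p\mid\rad(\la_i)}p^{k_p}:k_p\in\bZ\}$; the statement as quoted from \cite{s3} omits the sign (for instance $T=-I$ is an automorphism with $x_1=x_2=-1$), so that discrepancy is a defect in the quoted statement rather than in your argument.
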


Note that in the case $(b)$, by Theorem \ref{th:2dimredb0}, every $T\in\End(G_A)$ commutes with $A$. 
Recall that, up to multiplication by a power of $A$, $T$ is an integer matrix. Therefore, as a $\bZ[A,A^{-1}]$-module, $\End(G_A)$ is generated by elements of the centralizer of $A$ in $\M_2(\bZ)$.

\begin{thm}[Theorem 3.10 \cite{s3}]\label{th:2dimredc0}
Assume $n=2$, $A=M\La M^{-1}$, where $M,\La$ are given by $\eqref{eq:dim2}$, and $\cP'\ne\emptyset$. Assume case $(c)$, i.e., 
$\rad(\la_2)\,\vert \rad(\la_1)$. Then $T\in\End(G_A)$ if and only if 
\bbe\label{eq:Ttriang}
T=\left(\begin{matrix}
x & y \\
0 &  z 
\end{matrix}
\right)\in\M_2(\caR),\quad z\in\bZ[\la_2^{-1}].
\ee
Moreover, $T\in\Aut(G_A)$ if and only if $T$ has the form \eqref{eq:Ttriang}, where $x$ is a unit in $\caR$, $y\in\caR$, and
$z=\prod_{\text{prime }p\,\vert\rad(\la_2)}p^{k_p}$, $k_p\in\bZ$.
\end{thm}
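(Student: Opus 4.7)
The plan is to show any $T\in\End(G_A)$ is upper-triangular in the standard basis, pin down the arithmetic constraints on its entries by a local analysis at primes dividing $\det A$, and then read off $\Aut(G_A)$. By the general argument at the start of this section, every $T\in\End(G_A)$ lies in $\M_2(\caR)$; write its entries as $T_{11}=x$, $T_{12}=y$, $T_{21}=w$, $T_{22}=z$, all in $\caR$. The hypothesis $\cP'(A)\ne\emptyset$ provides a prime $p\in\cP$ with $h_A\not\equiv t^2\pmod{p}$, so $p$ divides at most one of $\la_1,\la_2$; combined with $\rad(\la_2)\mid\rad(\la_1)$, this forces $p\mid\la_1$ and $p\nmid\la_2$. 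Invoking the inclusion $T(\cX_{A,p})\subseteq\cX_{A,p}$ from the proof of Proposition \ref{pr:decide} (the splitting field here being just $\bQ$), and noting that the column of $M$ corresponding to $\la_1$ is $e_1$, one obtains $Te_1\in\bQ\cdot e_1$, whence $w=0$.

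Next, since $e_2\in\bZ^2\subseteq G_A$, the second column $(y,z)^t$ of $T$ must itself lie in $G_A$, so there exists $j\ge 0$ with $A^j(y,z)^t\in\bZ^2$; the bottom coordinate is $\la_2^j z$, forcing $z\in\bZ[\la_2^{-1}]$. For the converse direction, the assumption $\rad(\la_2)\mid\rad(\la_1)$ yields the key identity $\caR=\bZ[\la_1^{-1}]$, so every prime in $\cP$ divides $\la_1$. For an arbitrary $v=A^{-k}u\in G_A$ with $u\in\bZ^2$ and $k\ge 0$, the matrix $A^j T A^{-k}$ is upper-triangular with diagonal $(\la_1^{j-k}x,\la_2^{j-k}z)$ and $(1,2)$-entry a $\caR$-combination of $\la_1^j$ and $(A^j)_{12}$. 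For $j$ large the diagonal is integral (since $x\in\bZ[\la_1^{-1}]$ and $z\in\bZ[\la_2^{-1}]$), and the $(1,2)$-entry is integral at every $p\in\cP$: the $\la_1^j$ contribution is cleared because every $p\in\cP$ divides $\la_1$, while a short valuation check based on the explicit formula $(A^j)_{12}=u(\la_2^j-\la_1^j)/v$ (derived from $A=M\La M^{-1}$) handles the remaining term.

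For $\Aut(G_A)$: an upper-triangular $T$ with $x,z\ne 0$ is invertible in $\M_2(\bQ)$, and $T^{-1}$ is upper-triangular with diagonal $(x^{-1},z^{-1})$ and $(1,2)$-entry $-y/(xz)$; requiring both $T$ and $T^{-1}$ to satisfy the endomorphism conditions just established forces $x$ to be a unit of $\caR$ and $z$ to be a unit of $\bZ[\la_2^{-1}]$ (equivalently, a product $\prod_{p\mid\rad(\la_2)}p^{k_p}$ up to sign), while $-y/(xz)\in\caR$ is automatic once $x$ is a unit. The main bookkeeping obstacle I anticipate is the sufficiency step, namely verifying that no hidden constraint on $y$ arises from the off-diagonal coupling; this is precisely where $\rad(\la_2)\mid\rad(\la_1)$ is essential, because it collapses $\caR$ to $\bZ[\la_1^{-1}]$ and lets the factor $\la_1^j$ in the top row of $A^jTA^{-k}$ absorb arbitrary denominators coming from $y$.
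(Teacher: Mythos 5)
The paper does not prove this statement; it quotes it as Theorem~3.10 of reference \cite{s3}, so there is no in-paper proof to compare against. Your reconstruction is nonetheless correct, and it is the natural argument given the machinery the present paper sets up. A few remarks.

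Your three key steps all check out. (i) The reduction $T\in\M_2(\caR)$ is stated at the top of Section~3. (ii) For the triangularity $w=T_{21}=0$, you invoke $T(\cX_{A,p})\subseteq\cX_{A,p}$ from the proof of Proposition~\ref{pr:decide}; this works, but note that that inclusion is itself only cited there (to \cite[Theorem 4.3]{s2}), so your argument inherits that dependency. It can be made entirely self-contained here: for $p\mid\lambda_1$, $p\nmid\lambda_2$ (such $p$ exists because $\cP'\ne\emptyset$ and $\rad(\lambda_2)\mid\rad(\lambda_1)$), apply $T$ to $A^{-j}e_1=\lambda_1^{-j}e_1\in G_A$ and read off the second coordinate of $A^{\ell}T(\lambda_1^{-j}e_1)$, namely $\lambda_1^{-j}\lambda_2^{\ell}w$; taking $\val_p$ forces $\val_p(w)\ge j\,\val_p(\lambda_1)$ for every $j$, hence $w=0$. (iii) The constraint $z\in\bZ[\lambda_2^{-1}]$ from $Te_2\in G_A$ and the sufficiency computation are correct. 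For the sufficiency, the identity $(A^jTA^{-k})_{12}=\lambda_1^{j}\bigl(x\,(A^{-k})_{12}+y\lambda_2^{-k}\bigr)+(A^j)_{12}\,z\lambda_2^{-k}$ indeed exhibits the entry as a $\caR$-combination of $\lambda_1^{j}$ and $(A^j)_{12}=u(\lambda_2^{j}-\lambda_1^{j})/v$, and $\caR=\bZ[\lambda_1^{-1}]$ (valid in case (c)) together with $\val_p\bigl((A^j)_{12}\bigr)\ge j-\val_p(v)$ for $p\mid\lambda_2$ makes everything $p$-integral for $j$ large; primes outside $\cP$ never appear because $A^{\pm1},T\in\M_2(\caR)$.

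The $\Aut(G_A)$ part is also right. One minor point: as you observe parenthetically, the units of $\bZ[\lambda_2^{-1}]$ are $\pm\prod_{p\mid\rad(\lambda_2)}p^{k_p}$, so the theorem statement as quoted is missing a sign; e.g.\ $T=\operatorname{diag}(1,-1)\in\Aut(G_A)$ has $z=-1$. Your version is the accurate one.
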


Together with Theorem \ref{th:endcomp}, this fully resolves  the question of the computability of 
$\End(G_A)$, $\Aut(G_A)$ in the affirmative for $n=2$. 

\begin{cor}\label{cor:endcomp2}
Let $A\in\M_2(\bZ)$ be non-singular. 
Then $\End(G_A)$, $\Aut(G_A)$ are computable. 
\end{cor}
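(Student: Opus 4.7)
The plan is a case analysis driven by the characteristic polynomial $h_A \in \bZ[t]$. First I would compute $\cP(A)$ and $\cP'(A)$ by factoring $\det A$ and reducing $h_A$ modulo each $p \in \cP$. If $\cP(A) = \emptyset$ or $\cP'(A) = \emptyset$, then Lemma \ref{l:easy0} yields closed-form descriptions of $\End(G_A)$ and $\Aut(G_A)$ directly.

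Assume from here on that $\cP'(A) \ne \emptyset$. I would test irreducibility of $h_A$ via a standard factorization algorithm over $\bZ[t]$. If $h_A$ is irreducible, then for any $p \in \cP'$ one has $0 < t_p < 2$, forcing $t_p = 1$, so the coprimality condition $(n, t_p) = 1$ is automatic since $n = 2$ is prime. Thus Theorem \ref{th:endcomp} applies and produces computable descriptions of $\End(G_A)$ and $\Aut(G_A)$.

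If instead $h_A$ is reducible, then both roots $\la_1, \la_2$ are integers, and they are distinct (otherwise $h_A \equiv (t - \la_1)^2 \pmod{p}$ for every $p \in \cP$, contradicting $\cP' \ne \emptyset$). Following Remark \ref{rem:casebc0}, I would first diagonalize $A$ over $\bQ$ and then apply a standard integral triangularization to produce $S \in \GL_2(\bZ)$ with $S A S^{-1} = M \La M^{-1}$ in the normal form \eqref{eq:dim2}; since $G_A \cong G_{SAS^{-1}}$, I may replace $A$ by this representative. Comparing $\rad(\la_1)$ and $\rad(\la_2)$ -- a finite computation -- distinguishes case (b) from case (c), and Theorem \ref{th:2dimredb0} or Theorem \ref{th:2dimredc0} respectively gives $\End(G_A)$ and $\Aut(G_A)$ explicitly in terms of the finite data $(\la_1, \la_2, u, v)$ and the primes dividing the $\la_i$.

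In every branch, the resulting description is a finitely generated $\caR$-module (respectively, a finitely generated abelian group) whose generators and defining relations are determined by finitely many rational primes together with explicit divisibility constraints, all of which are computable. The only step that is not completely immediate is the integral triangularization in the reducible case, but this is standard (as noted in Remark \ref{rem:casebc0}). I therefore expect no genuine obstacle: the corollary assembles the previously established computable descriptions across a computable case split.
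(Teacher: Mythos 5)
Your case analysis -- $\cP$ or $\cP'$ empty, then irreducible $h_A$ via Theorem~\ref{th:endcomp} with the correct observation that $(2,t_p)=1$ holds automatically, then reducible split into cases (b)/(c) via Remark~\ref{rem:casebc0} and Theorems~\ref{th:2dimredb0}, \ref{th:2dimredc0} -- matches the paper's route exactly and is a correct skeleton. However, you identify the integral triangularization as ``the only step that is not completely immediate,'' and that is not where the paper locates the difficulty.

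The gap is in the reducible cases. Theorems~\ref{th:2dimredb0} and \ref{th:2dimredc0} give a \emph{membership criterion} for $T\in\End(G_A)$, parametrized by $(x_1,x_2)$ with $x_i\in\bZ[\la_i^{-1}]$ (an infinite set), subject to the divisibility constraint $\frac{x_1-x_2}{v}\in\caR$. Saying that this ``gives $\End(G_A)$ explicitly'' or that the ``divisibility constraints \dots are computable'' does not establish computability of the \emph{structure}: you must show that scanning the infinitely many candidate pairs $(x_1,x_2)$ terminates. That reduction is the entire content of the paper's proof: one factors $v=v_1v_2$ with $\rad(v_1)\mid\det A$ and $(v_2,\det A)=1$, notes the condition only depends on $v_2$, and then uses that $\la_i$ (for $\End$) respectively each prime $p\mid\rad(\la_2)$ (for $\Aut$) has finite multiplicative order modulo $v_2$, so the exponents $m_i$ respectively $k_p$ need only range over a bounded window and the condition becomes a finite system of congruences. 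Without this periodicity observation, ``finitely generated with explicit constraints'' is an assertion, not an argument, and the proof does not go through. (Minor additional point: in case (b), $\End(G_A)$ is naturally a finitely generated $\bZ[A,A^{-1}]$-module rather than a $\caR$-module via scalar multiplication -- $\bZ[\la_1^{-1}]\cap\bZ[\la_2^{-1}]$ is typically just $\bZ$, not $\caR$ -- so the module structure you invoke is not quite the right one.)
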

\begin{proof}
First, we check that $\frac{x_1-x_2}{v}\in\caR$ is computable. Indeed, by the definition of $\caR$, 
without loss of generality, we can assume $(v,\det A)=1$. In other words, we write $v=v_1v_2$, $v_1,v_2\in\bZ$ with 
$\rad(v_1)$ dividing $\det A$ and $(v_2,\det A)=1$. Then $\frac{x_1-x_2}{v}\in\caR$ if and only if $\frac{x_1-x_2}{v_2}\in\caR$. 
In the case of $\End(G_A)$, $x_i=n_i\la_i^{m_i}$, $n_i,m_i\in\bZ$, $i=1,2$. 
It is enough to check for finitely many $n_i$ modulo $v$ and $m_i$ such that $\abs{m_i}$ is bounded by the order of $\la_i$ modulo $v$.
In the case of $\Aut(G_A)$, any $p$ dividing $\rad(\la_2)$ has a finite order modulo $v$. Thus, to check the condition, it is enough to consider finitely many powers $k_p$ such that  
$\abs{k_p}$ is bounded by the order of $p$ modulo $v$. Everything else follows from Theorem \ref{th:endcomp}, Theorem \ref{th:2dimredb0}, and Theorem \ref{th:2dimredc0}. 
\end{proof}

\begin{example}
Let
$$
A=\begin{pmatrix}
100 & 110 \\
120 & 162
\end{pmatrix},
$$
$h_A=x^2-262x+3000$ with roots $\la_1=2^2\cdot 3=12$ and $\la_2=2\cdot 5^3=250$, $\rad\la_1=6$, $\rad\la_2=10$. Thus, 
Theorem \ref{th:2dimredb0} applies, and we use it together with the proof of Corollary \ref{cor:endcomp2} to compute $\End(G_A)$ and
$\Aut(G_A)$.
We have that $\caR=\{2^i3^j5^k\,\vert\,i,j,k\in\bZ\}$, 
$$
S=\begin{pmatrix}
5 & -1 \\
-4 & 1
\end{pmatrix},\quad
M=\begin{pmatrix}
1 & 26 \\
0 & 119
\end{pmatrix},\quad
S^{-1}AS = M\begin{pmatrix}
12 & 0 \\
0 & 250
\end{pmatrix}M^{-1},
$$
where $S\in\GL_2(\bZ)$ and 
$v=119=7\cdot 17$.  We first find $\End(G_A)$. Clearly, from Theorem \ref{th:2dimredb0}, $\End(G_A)$ is commutative, so it has a structure of a $\bZ[A,A^{-1}]$-module. By Theorem \ref{th:2dimredb0}, for $T\in\End(G_A)$ we have that $S^{-1}TS=MXM^{-1}$, $X=\diag\left(\begin{matrix} x_1 & x_2\end{matrix}\right)$, where $x_1\in\bZ[\frac{1}{12}]$, $x_2\in\bZ[\frac{1}{250}]$, and $\frac{x_1-x_2}{119}\in\caR$. 
As a $\bZ[A,A^{-1}]$-module, $\End(G_A)$ is generated by the $2\times 2$-identity matrix and 
$$
S_1= 
\begin{pmatrix}
44 & 55 \\
60 & 75
\end{pmatrix},
\quad 
S_2=
\begin{pmatrix}
75 & -55 \\
-60 & 44
\end{pmatrix}.
$$
To find 
$\Aut(G_A)$, by Theorem \ref{th:2dimredb0}, we write $x_1=2^a3^b$, $x_2=2^c5^d$ and find all $a,b,c,d\in\bZ$ such that 
$(x_1-x_2)/119\in\caR$. That gives $b\equiv 5d-14(a-c)\,\text{mod} \,48$, where $a,c,d\in\bZ$ are arbitrary.
\end{example}

\section{Computability of  $\Isom(G_A,G_B)$} 
%
%
%
%
In this section, we assume that the characteristic polynomial $h_A\in\bZ[t]$ of $A\in\M_n(\bZ)$ is irreducible and there exists a prime $p\in\bN$ such that $n$ and $t_p$ are coprime 
(denoted by $(n,t_p)=1$), where $t_p$ is the multiplicity of zero in the reduction of 
${h}_A$ modulo $p$. Let $B\in\M_n(\bZ)$ be non-singular. We show that there is an algorithm to find all (group) isomorphisms between $G_A$ and $G_B$ or to show that $G_A$, $G_B$ are not isomorphic. This demonstrates that given the assumptions stated above, the question of whether $G_A$ and $G_B$ are isomorphic is decidable and the set of all isomorphisms between $G_A$ and $G_B$ is computable. By decidable, we mean that there exists an algorithm that terminates in finitely many steps. We do not, however, make any claims about the efficiency or practicality of this algorithm. 

\begin{rem}\label{rem:easycases}
If $A\in\GL_n(\bZ)$, then $G_A\cong G_B$ if and only if $B\in\GL_n(\bZ)$. Moreover, in that case, 
$G_A=G_B=\bZ^n$ and $\Isom(G_A,G_B)=\GL_n(\bZ)$. It follows from \cite[Lemma 3.6, Lemma 3.11]{s1} that if $A\not\in\GL_n(\bZ)$ and $\cP'(A)=\emptyset$, then $G_A\cong G_B$ if and only if $\det A$, $\det B$ have the same prime divisors. Moreover, in that case, 
$\cP'(B)=\emptyset$ and $\Isom(G_A,G_B)=\GL_n(\caR)$. Clearly, the conditions are computable, so   for the rest of the section we assume that 
$A\not\in\GL_n(\bZ)$ and $\cP'(A)\ne\emptyset$. 
\end{rem}
\begin{thm}\label{th:compirr}
Let $A,B\in\M_n(\bZ)$ be non-singular. Assume that the characteristic polynomial $h_A\in\bZ[t]$ of $A$ is irreducible and there exists $t_p$ such that $(n,t_p)=1$. Then the existence of an isomorphism between $G_A$ and $G_B$ is decidable and the set of all  
isomorphisms between $G_A$ and $G_B$ is computable.
\end{thm}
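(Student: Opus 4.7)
The plan is to combine the rigid structure of $\Aut(G_A)$ coming from Theorem \ref{th:endcomp} with the decidability of testing individual candidates (Proposition \ref{pr:decide}), reducing the search for an isomorphism to a finite enumeration followed by an ideal-class comparison in $\cO_{K,\la}$. First I would dispense with the trivial cases via Remark \ref{rem:easycases}, reducing to $A\notin\GL_n(\bZ)$ and $\cP'(A)\ne\emptyset$.

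Suppose $\phi\colon G_A\to G_B$ is any group isomorphism. Since multiplication by $B$ is an automorphism of $G_B$, the conjugate $C:=\phi^{-1}B\phi$ lies in $\Aut(G_A)$. By Theorem \ref{th:endcomp} and \cite[Proposition 4.1]{s3}, every element of $\Aut(G_A)$ has the form $T(x)=MX_xM^{-1}$ with $X_x=\diag(\sg_1(x),\ldots,\sg_n(x))$ for some $x\in\cO_{K,\la}^\times$, and its characteristic polynomial equals $g_x^{n/d}$, where $g_x\in\bZ[t]$ is the minimal polynomial of $x$ over $\bQ$ and $d=[\bQ(x):\bQ]$. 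Hence a necessary condition for $G_A\cong G_B$ is that $h_B=g^k$ is a prime power in $\bZ[t]$ with $g$ monic irreducible of degree $n/k$, and that $g$ has a root in $\cO_{K,\la}^\times$. All of this is computable: factor $h_B$ in $\bZ[t]$; factor $g$ in $K[t]$; and for each root in $K$ test the $S$-unit condition via $\fp$-adic valuations at primes of $\cO_K$ not dividing $\la$. If these conditions fail, then $G_A\not\cong G_B$.

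Otherwise I obtain a finite (and explicitly computable) set $\cX\subset\cO_{K,\la}^\times$ of candidate roots. For each $x\in\cX$ I compute $C_x:=T(x)\in\Aut(G_A)\cap\M_n(\caR)$ by the procedure of Theorem \ref{th:endcomp}, and solve the linear system $B\phi=\phi C_x$ for $\phi\in\M_n(\bQ)$ by Gaussian elimination. If the solution subspace $\cV_x$ contains no invertible matrix, then no isomorphism can pull $B$ back to $C_x$ and this $x$ is discarded. Otherwise pick $\phi_0\in\cV_x\cap\GL_n(\bQ)$ and set $H_0:=\phi_0^{-1}G_B$, which is a full-rank subgroup of $\bQ^n$ invariant under $C_x$. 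Identifying $\bQ^n$ with $K$ through the eigenvector ${\bf u}$ of $A$, so that $C_x$ acts by multiplication by $x$, both $G_A$ and $H_0$ become invertible fractional ideals $\cG_A,\,\cH_0\subset K$ of the Dedekind domain $\cO_{K,\la}$. In the generic case $k=1$, every other element of $\cV_x$ has the form $\phi_0\cdot p(C_x)$ with $p(t)\in\bQ[t]$, and such a $\phi_0\cdot p(C_x)$ is an isomorphism $G_A\to G_B$ precisely when $p(x)\cdot\cG_A=\cH_0$ in $K$; so the existence of an isomorphism in $\cV_x$ is equivalent to $\cG_A$ and $\cH_0$ lying in the same $\cO_{K,\la}$-ideal class. (For $k>1$ one replaces $K$ by the centralizer subring $\bQ(x)\subsetneq K$ and runs the analogous module-equivalence check; the structure of the centralizer $Z_{\M_n(\bQ)}(C_x)\cong\M_k(\bQ(x))$ is still finite-dimensional and computable.)

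The final step is standard algorithmic number theory in $\cO_{K,\la}$: its class group is a finite quotient of the class group of $\cO_K$, and deciding whether two fractional ideals lie in the same class -- and producing a generator $c\in K^\times$ of the quotient when they do -- is computable via algorithms such as \cite[Algorithm 6.5.10]{cohen}. If $\cH_0=c\cdot\cG_A$, then $\phi:=\phi_0\cdot T(c)$ is an isomorphism $G_A\to G_B$, which I would verify directly by Proposition \ref{pr:decide}. Once one such $\phi$ is produced, $\Isom(G_A,G_B)=\phi\cdot\Aut(G_A)$, and the right-hand side is computable by Theorem \ref{th:endcomp}; if no $x\in\cX$ yields a match, then $G_A\not\cong G_B$. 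The main obstacle I anticipate is the concrete realization of $\cG_A$ and $\cH_0$ as $\cO_{K,\la}$-fractional ideals: one must pass explicitly from the $\bZ$-lattices $\bZ[{\bf u}]$ and $\phi_0^{-1}\bZ[{\bf v}]$ in $K$ to their $\cO_{K,\la}$-saturations and track the interaction of Galois conjugation with the embedding $\bQ^n\hookrightarrow K$, but all of these manipulations are routine in modern computer algebra systems such as \cite{sage}.
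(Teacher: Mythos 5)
Your high-level strategy differs from the paper's in an interesting way: you conjugate $B$ into $\Aut(G_A)$ via a hypothetical isomorphism, use the rigidity of $\Aut(G_A)$ to produce a finite set of candidate matrices $C_x=T(x)$, and then try to decide isomorphism by comparing ideal classes. The paper instead parametrizes candidate isomorphisms directly as $T=N X M^{-1}$, shows via an upper-triangularization and the lattice indices $l_1, l_2, t$ that $x = y/l_2$ with $y\in\cO_K$ bounded modulo $t l_1 l_2\cO_K$, and reduces the membership tests to a finite system of congruences, using the factorization $yz=\la^j l_1 l_2$ only to cut down the set of candidate $y$ up to units.

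There is, however, a genuine gap in your final reduction. You assert that under the identification of $\bQ^n$ with $K$ via the eigenvector ${\bf u}$, \emph{both} $G_A$ and $H_0=\phi_0^{-1}G_B$ ``become invertible fractional ideals $\cG_A,\cH_0$ of the Dedekind domain $\cO_{K,\la}$,'' and consequently that the existence of $c\in K^\times$ with $c\cG_A=\cH_0$ is equivalent to $\cG_A,\cH_0$ lying in the same $\cO_{K,\la}$-ideal class. But $\cG_A$ is not an $\cO_{K,\la}$-module in general: its multiplier ring $\{y\in K: y\cG_A\subseteq\cG_A\}$ is exactly (the image of) $\End(G_A)$, and the paper's own Example~\ref{ex:30} exhibits an $A$ with $\End(G_A)=\bZ[A^{-1}]\cong\bZ[\la^{-1}]\subsetneq\cO_{K,\la}$. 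In that situation $\cG_A$ is only a fractional ideal of the proper sub-order $\bZ[\la^{-1}]$, and the ideal-class comparison you propose takes place over the wrong ring. Concretely: the set of $c$ with $c\,\cO_{K,\la}\cG_A=\cO_{K,\la}\cH_0$ is a full coset of $\cO_{K,\la}^\times$, whereas the set of valid $c$ with $c\cG_A=\cH_0$ (if nonempty) is only a coset of $\End(\cG_A)^\times\subseteq\cO_{K,\la}^\times$; testing a single representative $c_0$ of the former coset and then concluding non-isomorphism when $c_0\cG_A\ne\cH_0$ can therefore yield a false negative, since some $c_0u$ with $u\in\cO_{K,\la}^\times\setminus\End(\cG_A)^\times$ might still work. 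Your parenthetical mention of ``$\cO_{K,\la}$-saturations'' does not cure this: comparing the saturated lattices $\cO_{K,\la}\cG_A$ and $\cO_{K,\la}\cH_0$ tests a strictly weaker equivalence, and a match there tells you nothing definitive about the original unsaturated lattices. To make your approach rigorous you would need to work in the Picard group of the actual order $\End(\cG_A)$ (first verifying $\End(\cG_A)=\End(\cH_0)$), or else augment the ideal-class step with a finite search over $\cO_{K,\la}^\times/\End(\cG_A)^\times$; the paper circumvents this entirely by reducing to a bounded search over $y\in\cO_K$ modulo $t l_1 l_2\cO_K$ followed by explicit congruence checks.
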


\begin{proof}
Clearly, if there exists an isomorphism $T_0:G_A\rar G_B$, then the set of all the isomorphisms from $G_A$ to $G_B$ has the form
$$
\{ T_0\phi \,\vert\,\phi\in\Aut(G_A) \}
$$
and by Theorem \ref{th:endcomp}, the group $\Aut(G_A)$ is computable. 
Thus, it suffices to determine whether $G_A$ and $G_B$ are isomorphic, and if so, to find an explicit isomorphism. However, in our view, the complexity of finding a single isomorphism is comparable to that of finding all of them.

\sbr

Assume $T:G_A\rar G_B$ is an isomorphism.
By \cite[Definition 5.1, Proposition 5.7]{s2},  there exist eigenvalues $\la,\mu\in\overline{\bQ}$ of $A$, $B$, respectively, such that 
$K=\bQ(\la)=\bQ(\mu)$ and $\la,\mu$ have the same prime ideal divisors in the ring of integers $\cO_K$ of $K$. 
Both conditions are computable and implemented into \cite{sage} (see {\it e.g.}, \cite[Algorithm 4.5.5]{cohen}, \cite[Algorithm $A_{10}$]{gr}). 
Multiplying an eigenvector ${\bf u}\in K^n$ of $A$ corresponding to $\la$ by an appropriate integer, without loss of generality, we can assume ${\bf u}\in(\bZ[\la])^n$, where $\bZ[\la]\subseteq\cO_K$. 
Since $h_A$ is irreducible, there exist (field) embeddings $\{\sg_1,\ldots,\sg_n\}$ of $K$ into $\overline{\bQ}$, 
$\sg_1=\id$, such that $\{\sg_1(\la),\ldots,\sg_n(\la)\}$ are all eigenvalues of $A$. Thus, $A=M\La M^{-1}$, where
$$
\La=\diag\left(\begin{matrix} \sg_1(\la)& \ldots & \sg_n(\la)\end{matrix}\right),\quad M=\left(\begin{matrix} \sg_1({\bf u}) & \ldots & \sg_n({\bf u}\end{matrix})\right)
$$
with each $\sg_i({\bf u})$ written as a column, $i\in\{1,\ldots,n\}$. Analogously, we choose 
an eigenvector ${\bf v}\in(\bZ[\mu])^n$ of $B$ corresponding to $\mu$, $\bZ[\mu]\subseteq\cO_K$. Then $B=N\Ga N^{-1}$, where
$$
\Ga=\diag\left(\begin{matrix} \sg_1(\mu)& \ldots & \sg_n(\mu)\end{matrix}\right),\quad N=\left(\begin{matrix} \sg_1({\bf v}) & \ldots & \sg_n({\bf v}\end{matrix})\right).
$$
By \cite[Lemma 3.1]{s1}, \cite[Definition 5.1, Proposition 5.7]{s2}, if $T:G_A\to G_B$ is an isomorphism, then $T\in\GL_n(\bQ)$ and there exists $x\in K-\{0\}$ such that
\bbe\label{eq:t1}
T=NXM^{-1},\quad X=\diag\left(\begin{matrix} \sg_1(x)&\ldots &\sg_n(x)\end{matrix}\right),\quad T=T(x).
\ee
Let $\om_1,\ldots,\om_n\in\cO_K$ be an integral basis of $\cO_K$, and let
$\Om=(\sg_j(\om_i))_{i,j}$. Since ${\bf u},{\bf v}\in(\cO_K)^n$ by assumption, there exist non-singular $L_1,L_2\in\M_n(\bZ)$ such that
$M=L_1\Om$, $N=L_2\Om$. 
Let
\begin{eqnarray*}
l_1&=&\abs{\det L_1}=[\cO_K:\bZ[ {\bf u}]], \\ 
l_2&=&\abs{\det L_2}=[\cO_K:\bZ[ {\bf v}]], \\
t&=&\min\{[\cO_K:\bZ[\la]],[\cO_K:\bZ[ \mu]] \}.
\end{eqnarray*}
By the definition of $G_A$, $T\in\M_n(\bQ)$ is a homomorphism from 
 $G_A$ to $G_B$ if and only if 
for any $k\in\bN\cup\{0\}$ there exists $l_k\in\bN\cup\{0\}$ with
\bbe\label{eq:main1}
B^{l_k}TA^{-k}\in \operatorname{M}_n(\bZ).
\ee
In particular, $S=B^{l_0}T\in\M_n(\bZ)$. Since $T\in\Isom(G_A,G_B)$ if and only if $S\in\Isom(G_A,G_B)$, without loss of generality, we can assume that $T\in\M_n(\bZ)$.  
Let $F\subset\overline{\bQ}$ be a number field that contains all the eigenvalues of $A$, so that $M\in\M_n(\cO_F)$. 
By \cite[p. 4, Theorem 2]{nt}, there exists a finite extension $L\subset\overline{\bQ}$ of $F$ and 
$P\in\GL_n(\cO_L)$ such that $PM$ is upper-triangular. Thus, from  \eqref{eq:t1} 
\bbe\label{eq:last}
PM XM^{-1}P^{-1}=P(MN^{-1})TP^{-1}. 
\ee 
By above, $MN^{-1}=L_1L_2^{-1}\in\frac{1}{l_2}\M_n(\bZ)$, so that the entries of the matrix on the right in \eqref{eq:last} have denominators dividing $l_2$, since $T\in\M_n(\bZ)$ and $P\in\GL_n(\cO_L)$. On the other hand, the $(1,1)$-element of the matrix on the left in \eqref{eq:last} equals
$x$, since $PM$ is upper triangular and  $X$ is diagonal. Since $x\in K$, this implies that there exists $y\in\cO_K$ such that 
\bbe\label{eq:x}
x=\frac{y}{l_2}, \quad y\in\cO_K-\{0\}.
\ee
Since $\la,\mu$ have the same prime divisors, for each $k\in\bN$ there exists $l_k\in\bN$ 
big enough such that $\mu^{l_k}\la^{-k}\in\cO_K$. Thus, in \eqref{eq:main1}, for the rest of the proof,
 without loss of generality, we can assume that 
$\mu^{l_k}\la^{-k}\in\cO_K$ for each $k\in\bN$. These imply that the denominators of entries of $B^{l_k}TA^{-k}$ are bounded, since $B^{l_k}TA^{-k}=T(\mu^{\l_k}\la^{-k}x)$. Hence, verifying \eqref{eq:main1} for all $k\in\bN$ and all $x$ given by 
\eqref{eq:x} reduces to checking finitely many congruences, which is computable. We now make this statement more precise.
Note that for any $x\in tl_1\cO_K$, \eqref{eq:main1} holds for $T=T(x)$.
Indeed, if $x\in tl_1\cO_K$, then for any $k\in\bN$ and $l_k\in\bN$ such that $\mu^{\l_k}\la^{-k}\in\cO_K$, we have that
$\mu^{\l_k}\la^{-k}x=tl_1w$
 for some $w\in\cO_K$. Let $t=[\cO_K:\bZ[\la]]$. Then  
$tw\in\bZ[\la]$, that is $tw=p(\la)$, a polynomial with integer coefficients in $\la$, and 
 $$M\diag\left(\begin{matrix} \sg_1(tw)&\ldots &\sg_n(tw)\end{matrix}\right) M^{-1}=p(A)\in\M_n(\bZ).$$ Thus, 
\begin{eqnarray*}
&& B^{l_k}TA^{-k}=T(\mu^{\l_k}\la^{-k}x)=\\ 
&&=(l_1NM^{-1})
M\diag\left(\begin{matrix} \sg_1(tw)&\ldots &\sg_n(tw)\end{matrix}\right) M^{-1}\in\M_n(\bZ)
\end{eqnarray*}
as a product of two integer matrices. Similarly, if $t=[\cO_K:\bZ[\mu]]$, then  
$tw\in\bZ[\mu]$ and
\begin{eqnarray*}
&& B^{l_k}TA^{-k}=T(\mu^{\l_k}\la^{-k}x)=\\ 
&&=
(N\diag\left(\begin{matrix} \sg_1(tw)&\ldots &\sg_n(tw)\end{matrix}\right) N^{-1})(l_1NM^{-1})\in\M_n(\bZ).
\end{eqnarray*}
Since $x=y/l_2$ for some $y\in\cO_K$, we conclude that \eqref{eq:main1} holds for $T=T(y/l_2)$ if and only if it holds for $y$ modulo $tl_1l_2\cO_K$.
Finally, note that the set $\{\mu^{l_k}\la^{-k}\in\cO_K\,\vert\, k,l_k\in\bN \}$ is finite modulo $tl_1l_2$, since
$\cO_K/(tl_1l_2\cO_K)$ is finite. In other words, it follows that for $l_k$ such that $\mu^{l_k}\la^{-k}\in\cO_K$, 
the denominators of entries of $B^{l_k}TA^{-k}\in\GL_n(\bQ)$ divide $tl_1l_2$, that is,
each matrix $tl_1l_2B^{l_k}TA^{-k}\in\M_n(\bZ)$ has integer coefficients. Thus, 
\eqref{eq:main1} corresponds to finitely many congruences in $\bZ$:
\bbe\label{eq:main1red}
tl_1l_2B^{l_k}TA^{-k}\equiv 0\text{ mod }tl_1l_2,
\ee
where $0$ on the right is the zero $n\times n$-matrix. Similarly, by definition, $T^{-1}:G_B\rar G_A$ is a homomorphism if and only if
for any $i\in\bN\cup\{0\}$ there exists $j_i\in\bN\cup\{0\}$ with
\bbe\label{eq:main2}
A^{j_i}T^{-1}B^{-i}\in \M_n(\bZ).
\ee
This is equivalent to checking that for any $i\in\bN\cup\{0\}$ and any $j_i$ such that $\la^{j_i}\mu^{-i}\in\cO_K$: 
\bbe\label{eq:main2red}
tl_1l_2 A^{j_i}T^{-1}B^{-i}\equiv 0\text{ mod }tl_1l_2,
\ee
where $A^{j_i}T^{-1}B^{-i}$ is a matrix with rational entries, and $tl_1l_2 A^{j_i}T^{-1}B^{-i}$ is a matrix with integer entries. 
Note that  
$T^{-1}(x^{-1})=MX^{-1}N^{-1}$. 
As in the case of $T$ above, if $T^{-1}(G_B)\subseteq G_A$ (equivalently, $T^{-1}:G_B\rar G_A$ is a homomorphism), then there exists $\la^{j}$, $j\in\bN\cup\{0\}$, and $z\in\cO_K$ such that
$\frac{\la^j}{x}=\frac{z}{l_1}$. This implies 
\bbe\label{eq:yz}
yz=\la^jl_1l_2.
\ee
The uniqueness of prime ideal factorization in $\cO_K$ allows us to identify finitely many candidates for $y$.
Namely, from \eqref{eq:yz}, $(y)=y\cO_K=\fa_1\fa_2$, where $\fa_1\subseteq\cO_K$ is an ideal dividing $\la^j$, and $\fa_2\subseteq\cO_K$ is an ideal dividing $l_1l_2$. Observe that the norm of $\fa_2$ is bounded. Let 
$(\la)=\fp_1\cdots\fp_s$, $(l_1l_2)=\fq_1\cdots\fq_r$ for some (not necessarily distinct) prime ideals 
$\fp_1,\ldots,\fp_s$, $\fq_1,\ldots,\fq_r$ of $\cO_K$. It is known that every ideal has a finite order in the class group of $\cO_K$.  For each $i$, let $n_i\in\bN$ be the order of $\fp_i$ in the class group of $\cO_K$, that is $\fp_i^{n_i}=(a_i)$ is principal for some $a_i\in\cO_K$, $1\leq i\leq s$. Denote 
$$
J=\{\fp_i^{j},\fq_1,\ldots,\fq_r\,\vert\, 0\leq j< n_i,\,\, 1\leq i\leq s\},
$$ 
a finite set. 
We now check which ones out of finitely many ideals 
 dividing the product of all ideals in $J$ are principal. 
That will determine $y$ up to multiplication by a unit in $\cO_K^{\times}$. It is known that $\cO_K^{\times}$ is a finitely generated group. As was explained above, it is enough to check \eqref{eq:main1red} and \eqref{eq:main2red} for 
$y$ and $z$ modulo $tl_1l_2\cO_K$. Therefore, assuming the necessary conditions for $G_A\cong G_B$ described at the beginning of the proof are satisfied, finding all isomorphisms between $G_A$ and $G_B$ reduces to checking finitely many congruences for finitely many candidate pairs $y,z\in\cO_K$. 

\sbr

Finally, it is known that all the ingredients in the proof are computable, such as computing an integral basis of $\cO_K$, starting from $1,\la,\ldots,\la^{n-1}$, as well as indices $l_1,l_2$ ({\it e.g.}, Algorithm $A_6$ \cite{gr}), prime decompositions of ideals (see {\it e.g.}, \cite[Algorithm $A_{10}$]{gr}), deciding if a product of prime ideals is principal and if yes, producing a generator (see {\it e.g.}, \cite[Algorithm 6.5.10]{cohen})), computing the class number of $K$ (see {\it e.g.}, \cite[Algorithm $A_{12}$]{gr}), finding generating sets of $\cO_K^{\times}$ (see {\it e.g.}, \cite[Algorithm $A_8$]{gr}), and solving systems of congruences either by applying the Chinese Remainder Theorem when the moduli are coprime, or by using more general methods such as the extended Euclidean algorithm and verifying compatibility when the moduli are not coprime. 
\end{proof}

\begin{rem}\label{rem:compirr}
Let $A,B\in\M_n(\bZ)$ be non-singular, $\cP'(A)\ne\emptyset$. Assume in addition that $A$, $B$ are conjugate in $\GL_n(\bQ)$, {\it i.e.}, there exists $P\in\GL_n(\bQ)$ such that $B=PAP^{-1}$. 

\sbr

If $n=2$, the characteristic polynomial of $A$ is irreducible or 
$A$ has eigenvalues $\la_1,\la_2\in\bZ$ such that $\rad\la_i$ does not divide $\rad\la_j$, $i,j\in\{1,2\}$, $i\ne j$ (cases 
(a), (b) in Section \ref{ss:two0} above), then for any isomorphism $T$ from $G_A$ to $G_B$ there exists $l\in\bN\cup\{0\}$ such that
$B^lT\in\M_2(\bZ)$, and the transpose $(B^lT)^t$ of $B^lT$ is an epimorphism of dimension groups from $\mathscr{D}_{A^t}$ to $\mathscr{D}_{B^t}$ as defined in \cite{kr}. Conversely, the transpose of any epimorphism from $\mathscr{D}_{A^t}$ to $\mathscr{D}_{B^t}$ is an isomorphism from $G_A$ to $G_B$ (by \cite[Lemma 6.5]{s1}). Analogously, the same result holds 
 for an arbitrary $n$ such that there exists $t_p=t_p(A)$ satisfying $(n,t_p)=1$ and $h_A$ is irreducible.
%
\end{rem}

In the following examples, we illustrate how the procedure outlined in the preceding proof can be applied, using the same notation introduced there. Number field computations were performed using \cite{db} and \cite{sage}.

\begin{example}
Let  
$$
A=\left(\begin{matrix}
0 & -6 \\
1 & 1 
\end{matrix}
\right),\,\,
B=\left(\begin{matrix}
0 & -3 \\
2 & 1 
\end{matrix}
\right). 
$$
Both $A$ and $B$ have the same characteristic polynomial $h=x^2-x+6$ with root $\la=\frac{1+\sqrt{-23}}{2}$.
One can check that $A$, $B$ are $\GL_2(\bQ)$-conjugate but not $\GL_2(\bZ)$-conjugate to each other. 
In \cite[Example 7.2]{s1}, we showed that $G_A\cong G_B$ by finding one isomorphism between $G_A$ and $G_B$. 
Now, we find all isomorphisms between $G_A$ and $G_B$. In the notation of the proof of Theorem \ref{th:compirr}, 
\begin{eqnarray*}
&& \La=\Ga=\left(\begin{matrix}
\la & 0 \\
0 & \sg(\la) 
\end{matrix}
\right),\,\,
M=\left(\begin{matrix}
-\sg(\la) & -\la \\
1 & 1 
\end{matrix}
\right),\\
&& N=\left(\begin{matrix}
-\sg(\la) & -\la \\
2 & 2 
\end{matrix}
\right), 
\end{eqnarray*}
where $\sg(\la)=\frac{1-\sqrt{-23}}{2}$ is another eigenvalue of $A$ and
$
A=M\La M^{-1},\,\,
B=N\La N^{-1}. 
$
Let $K=\bQ(\la)$. It is known that $\cO_K=\bZ[\la]$, so that $t=1$, $l_1=1$, and $l_2=2$. 
Thus, from \eqref{eq:yz}, we have that $yz=2\la^j$. We now look at prime decompositions of ideals $(\la)$ and $(2)$.
We have $(\la)=\fp_1\fq_1$, 
$(2)=\fp_1\fp_2$, $(3)=\fq_1\fq_2$, 
where $\fp_i,\fq_i$ are prime ideals of $\cO_K$ of order $3$, $i=1,2$. This implies 
$(y)=\fp_1^{i_1}\fq_1^{i_2}\fp_2^{i_3}$, where $i_3=0,1$, and $i_1,i_2\in\bN\cup\{0\}$, and
$J=\{\fp_1^{j_1},\fq_1^{j_2},\fp_1,\fp_2\,\vert\, 0\leq j_1\leq 2,\, 0\leq j_2\leq 2 \}$. Thus, it is enough to check if 
$\fp_1^{i_1}\fq_1^{i_2}\fp_2^{i_3}$ is principal for $0\leq i_1\leq 2$, $0\leq i_2\leq 2$, $0\leq i_3\leq 1$. 
Let 
$\fp_1^3=(a)$, $\fq_1^3=(b)$, and $\fq_1^2\fp_2=(c)$, where $a=\la-2$, $b=2\la-3$, and $c=2b/\la=\la+3$. 
One checks all the possibilities: $(1)$, $(c)$, $(2)$, $(\la)$, $(2\la)$, and $(\la^2)$. We compute that $\cO_K^{\times}=\{\pm 1\}$ is finite of order $2$. Thus, $y$ equals one of the following 
$$
\pm a^kb^s,\, \pm a^kb^sc,\, \pm 2a^kb^s, \,\pm a^kb^s\la,\, \pm 2a^kb^s\la,\, \pm a^kb^s\la^2,
$$
where $k,s\in\bN\cup\{0\}$. Note that $a^kb^s=\pm\la^k$ or $a^kb^s=\pm\la^kb^{s-k}$ or $a^kb^s=\pm\la^sa^{k-s}$, 
since $(\la^3)=\fp_1^3\fq_1^3=(ab)$. Thus, $y$ equals one of the following
\bbe\label{eq:ylist}
\pm\la^j a^kb^s,\, \pm 2\la^j a^kb^s,\, \pm c\la^j a^kb^s,
\ee
where $j,k,s\in\bN\cup\{0\}$ and $ks=0$. 
One checks \eqref{eq:main1} for $T=T(y/2)$ for any $y$ in \eqref{eq:ylist} modulo
$2\cO_K$ and up to multiplication by an integer power of $\la$. Note that $a\equiv \la\,\text{mod}\, 2\cO_K$,
$b\equiv 1\,\text{mod}\, 2\cO_K$, and $\la c\equiv 0\,\text{mod}\, 2\cO_K$. Hence, it is enough to check 
\eqref{eq:main1red} for $y=1$ and $T=T(1/2)=\frac{1}{2}NM^{-1}$. 
Here, one checks that
$\frac{1}{2}NM^{-1}A\in\M_n(\bZ)$, so that $B^{k+1}TA^{-k}=\frac{1}{2}NM^{-1}A\in\M_n(\bZ)$ for any $k\in\bN\cup\{0\}$ and hence \eqref{eq:main1red} holds for any $y$ in \eqref{eq:ylist} and $T=T(y/2)$.
 Analogously, we have the same possibilities \eqref{eq:ylist} for $z$, for which we check \eqref{eq:main2red} 
 with $T^{-1}=T^{-1}(z)$. 
As with $y$, for the purpose of checking \eqref{eq:main2red}, $z$ is defined modulo $2\cO_K$ and up to a multiplication by an integer power of $\la$, hence it is enough to check
that \eqref{eq:main2red} holds for $z=1$ and $T^{-1}=T(1)^{-1}=MN^{-1}$. We compute
$$
2A^{j_i}T^{-1}B^{-i}=2MN^{-1}B^{j_i-i}=\left(\begin{matrix}
2 & 0 \\
0 & 1 
\end{matrix}
\right)B^{j_i-i},
$$
and we check whether
\bbe\label{eq:bj}
\left(\begin{matrix}
2 & 0 \\
0 & 1 
\end{matrix}
\right)B^j\equiv 0\text{ mod }2.
\ee
We have that
$$
B\equiv \left(\begin{matrix}
0 & 1 \\
0 & 1 
\end{matrix}
\right) \text{ mod }2,\quad
\left(\begin{matrix}
2 & 0 \\
0 & 1 
\end{matrix}
\right)\equiv\left(\begin{matrix}
0 & 0 \\
0 & 1 
\end{matrix}
\right)\text{ mod }2,\quad
\left(\begin{matrix}
2 & 0 \\
0 & 1 
\end{matrix}
\right)B^j\equiv
\left(\begin{matrix}
0 & 0 \\
0 & 1 
\end{matrix}
\right)\text{ mod }2 
$$
for any $j\in\bN\cup\{0\}$, 
so that \eqref{eq:bj} does not hold for $z=1$. This implies that 
$z=2\la^j a^kb^s$ or $z=\pm c\la^j a^kb^s$ by \eqref{eq:ylist}. Since $yz=2\la^j$, one concludes that $y=\pm a^kb^s\la^i$ and hence 
$$
x=\frac{y}{l_2}=\frac{y}{2}=\pm\frac{a^kb^s\la^i}{2},\quad  k,s\in\bN\cup\{0\},\,ks=0,\,i\in\bZ. 
$$
That gives all (infinitely many) isomorphisms  $T=T(x):G_A\rar G_B$ and,  
in particular, $G_A\cong G_B$. The result agrees with our previous calculation in \cite[Example 7.2]{s1}, 
where we found just one isomorphism $T=T(1/2):G_A\rar G_B$. Clearly, if there is one isomorphism 
$T:G_A\rar G_B$, then there are infinitely many, since $B^jTA^i$ is also an isomorphism for any $i,j\in\bZ$.
Nonetheless, in this example, we are able to find other families of isomorphisms generated by $\frac{a^k}{2}$
and $\frac{b^s}{2}$. 
\end{example}

\begin{example}\label{ex:different} 
Let 
\[
A=\begin{pmatrix}
0 & 0 & 7 \\
1 & 0 & -11 \\
0 & 1 & 6
\end{pmatrix}, \quad
B=\begin{pmatrix}
0 & 0 & 7 \\
1 & 0 & 9 \\
0 & 1 & 2
\end{pmatrix}
\]
be companion matrices of $h_A=x^3-6x^2+11x-7$, $h_B=x^3-2x^2-9x-7$, respectively. Both $h_A$, $h_B$ are irreducible over $\bQ$, $\cP(A)=\cP(B)=\cP'(A)=\cP'(B)=\{7\}$, and $t_7(A)=t_7(B)=1$. Thus, Theorem \ref{th:compirr} applies. 
One can check that there exist a root $\la$ of $h_A$ and a root $\mu$ of $h_B$ such that $K=\bQ(\la)=\bQ(\mu)$, and
$\la$, $\mu$ have the same prime ideal divisors in $\cO_K$. Let
\[
{\bf u}(\lambda)=\begin{pmatrix}
\lambda^{2}-6\lambda+11\\[2pt]
\lambda-6\\[2pt]
1
\end{pmatrix},\quad 
{\bf v}(\mu)=\begin{pmatrix}
\mu^{2}-2\mu-9\\[2pt]
\mu-2\\[2pt]
1
\end{pmatrix}
\]
be eigenvectors of $A$, $B$ corresponding to eigenvalues $\la$, $\mu$, respectively. Then, in the notation of Theorem \ref{th:compirr}, $\cO_K=\bZ[\la]$, $NM^{-1}\in\M_n(\bZ)$, $\det NM^{-1}=5$, $t=l_1=1$, $l_2=5$, $yz=5\la^i$, $\cO_K^{\times}$ is generated by $-1$ and 
$u=\la-2$, the ideal $(\la)$ is prime and $(5)=\fp_1\fp_2$, where 
$\fp_1=(\al)$, $\fp_2=(\be)$, $\al=\al(\la)=-2\la^2+9\la-11$, $\be=\be(\la)=-\la^2+5\la-4$. Note that $A$ and $A-2I$ are invertible modulo $5$, where $I$ is the $3\times 3$-identity matrix. We check for $y=\la^iu^j\al^r\be^s$, where $i,j\in\bZ$, $r,s\in\{0,1\}$, whether 
$$
T(x)=\frac{1}{5}T(y)=\frac{1}{5}NM^{-1}A^i(A-2I)^j\al(A)^r\be(A)^s
$$ 
is integer. It turns out that $T(x)$ is integer if and only if $r=s=1$. In that case, we check whether $A^kT^{-1}(z)$ is integer when $z=u^{l}$, $k,l\in\bZ$. Note that
$$
A^kT^{-1}(z)=A^{k}(A-2I)^{l}MN^{-1},
$$ 
and some of the entries of $MN^{-1}$ have $5$ in denominators. Thus, $5MN^{-1}$ is integer, and 
we check whether 
$5A^{k}(A-2I)^{l}MN^{-1}\equiv 0\text{ mod }5$. Recall that $A$, $A-2I$ are invertible modulo $5$, so one should have 
$5MN^{-1}\equiv 0\text{ mod }5$, which is not true. Therefore, $G_A$ and $G_B$ are not isomorphic. 
\end{example}

\subsection{$2$-dimensional case}\label{ss:two02} 
In this section, we give a complete solution to the decidability problem of whether two groups $G_A$ and $G_B$  
 are isomorphic, and to the computability of 
$\Isom(G_A,G_B)$, in the two-dimensional case.

\begin{prop}\label{pr:comp2}
Let $A,B\in\M_2(\bZ)$ be non-singular. The existence of an isomorphism between $G_A$ and $G_B$ is decidable and the set of all  
isomorphisms between $G_A$ and $G_B$ is computable.
\end{prop}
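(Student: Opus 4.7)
The plan is to reduce Proposition \ref{pr:comp2} to a case analysis that exhausts all possibilities for the pair $(A,B)$, invoking the structural results already established in the paper.

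First I would handle the trivial cases: by Remark \ref{rem:easycases}, the conditions $A\in\GL_2(\bZ)$ and $\cP'(A)=\emptyset$ are computable, and in each of them the conclusion of the proposition is immediate. So assume $A\not\in\GL_2(\bZ)$ and $\cP'(A)\ne\emptyset$, and factor $h_A$ over $\bZ[x]$. If $h_A$ is irreducible, then for any $p\in\cP'(A)$ one has $0<t_p<2$, so $t_p=1$ and $(n,t_p)=1$, placing us in the setting of Theorem \ref{th:compirr}, which settles both claims.

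If $h_A$ is reducible, then $A$ has distinct integer eigenvalues $\la_1,\la_2$; using Remark \ref{rem:casebc0} one computably conjugates $A$ (and likewise any $B$ whose $h_B$ is reducible) by an element of $\GL_2(\bZ)$ into the triangular normal form of \eqref{eq:dim2}. Since $G_A\cong G_{SAS^{-1}}$ for $S\in\GL_2(\bZ)$, we may assume $A$ and $B$ are already in this form, and the subcases (b) and (c) of Section \ref{ss:two0} can be read off the eigenvalues. These subcases are isomorphism invariants of $G_A$: the descriptions in Theorems \ref{th:2dimredb0} and \ref{th:2dimredc0} show that $\End(G_A)$ is commutative in case (b) but contains strictly upper-triangular nilpotents in case (c), and within case (b) the unordered pair $\{\rad(\la_1),\rad(\la_2)\}$, equivalently the pair of localizations $\{\bZ[\la_1^{-1}],\bZ[\la_2^{-1}]\}$, is determined by $G_A$. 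If the subcase of $B$ or its radical invariants disagree with those of $A$, conclude $G_A\not\cong G_B$.

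It remains to treat the matching reducible subcases. The template is that of Theorems \ref{th:2dimredb0} and \ref{th:2dimredc0}: any isomorphism $T\colon G_A\to G_B$ must intertwine the simultaneously triangulated forms of $A$ and $B$, and is therefore described by a diagonal (case (b)) or upper-triangular (case (c)) matrix whose entries are (units of) $\bZ[\la_i^{-1}]$ or $\caR$, subject to a linear constraint coming from the off-diagonal data $v_A,v_B$ of \eqref{eq:dim2}. Because $\Isom(G_A,G_B)=T_0\cdot\Aut(G_A)$ for any single $T_0\in\Isom(G_A,G_B)$ and $\Aut(G_A)$ is already computable by Corollary \ref{cor:endcomp2}, it is enough to test existence of a single $T_0$. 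The main technical obstacle, as in the proof of Corollary \ref{cor:endcomp2}, is to reduce the conditions on the unit exponents to a finite search; the same argument applies, namely that the primes dividing the relevant radicals have finite multiplicative orders modulo the portion of $v_A,v_B$ coprime to $\det A\det B$, so only finitely many exponent tuples need be examined. All remaining ingredients — integral bases, ideal factorizations, class groups, $S$-unit generators, and congruence solving — are the standard computable routines cited elsewhere in the paper.
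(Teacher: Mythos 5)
Your plan follows the same case decomposition as the paper's proof: dispatch the trivial cases via Remark~\ref{rem:easycases}, observe that $\cP'(A)\ne\emptyset$ forces $t_p=1$ so Theorem~\ref{th:compirr} covers the irreducible case, triangularize via Remark~\ref{rem:casebc0}, and treat the two reducible subcases by describing isomorphisms explicitly and reducing to a finite congruence search. So the architecture is right.

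Where you part company with the paper is in how you \emph{justify} the two reducible subcases. The paper simply cites \cite[Proposition 5.1]{s1} (case (c)) and \cite[Remark 4.2, Proposition 6.9]{s1} (case (b)), which directly characterize $\Isom(G_A,G_B)$: they show $G_A\cong G_B$ iff the eigenvalue radicals match, and they give the exact shape of all $T\in\Isom(G_A,G_B)$ as in \eqref{eq:T}, \eqref{eq:triple}. You instead try to reconstruct this from the endomorphism-ring theorems \ref{th:2dimredb0} and \ref{th:2dimredc0}. That route has two soft spots. First, you assert that the unordered pair $\{\bZ[\la_1^{-1}],\bZ[\la_2^{-1}]\}$ is an isomorphism invariant of $G_A$ in case (b) and that case (b) versus (c) is distinguished by commutativity of $\End(G_A)$; these claims are plausible and can be extracted from those theorems, but you would need to actually spell out the recovery of the pair of localizations from the ring structure, which is not immediate. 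Second, and more significantly, you leap from ``$\End(G_A)$ consists of matrices of such-and-such shape'' to ``any isomorphism $T:G_A\to G_B$ must intertwine the triangulations of $A$ and $B$.'' That is a statement about $\Hom(G_A,G_B)$, not $\End(G_A)$, and is exactly what the \cite{s1} propositions prove; it is not a formal consequence of the endomorphism results. Finally, a small inefficiency: in case (c) the matching-radicals condition is \emph{already} sufficient (this is \cite[Proposition 5.1]{s1}; the off-diagonal entry $\nu_2$ is free in \eqref{eq:triple}), so the extra existence search you propose there always succeeds -- the genuine congruence obstruction \eqref{eq:main2} only appears in case (b). None of these is fatal, but your proof as written leans on the $\Hom$-level structure theorems from \cite{s1} without saying so, whereas the paper makes that dependence explicit.
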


\begin{proof}
By Remark \ref{rem:easycases}, we assume that 
$A\not\in\GL_n(\bZ)$ and $\cP'(A)\ne\emptyset$  for the rest of the proof.
As in Section \ref{ss:two0}, 
if $n=2$, then there are three cases. 
In the case when the characteristic polynomial $h_A\in\bZ[x]$ of $A$ is irreducible (equivalently, $A$ has no rational eigenvalues), the result follows from Theorem \ref{th:compirr}. 

\sbr

Assume the case when $h_A$ is reducible  (equivalently, $A$ has eigenvalues $\la_1,\la_2\in\bZ$) and every prime dividing one eigenvalue divides the other, {\it e.g.}, 
$\rad(\la_2)\,\vert \rad(\la_1)$. By \cite[Proposition 5.1]{s1}, we have that $G_A\cong G_B$ if and only if 
$B$ has eigenvalues $\mu_1,\mu_2\in\bZ$ such that $\rad\mu_i=\rad\la_i$, $i=1,2$. The condition is computable, hence 
whether $G_A\cong G_B$ is decidable. We now discuss how to find all isomorphisms from $G_A$ to $G_B$. By \cite[Remark 4.2, Proposition 5.1]{s1}, there exist $S,P\in\operatorname{GL}_2(\bZ)$ such that 
$$
A=SM\left(\begin{matrix}
\la_1 & 0 \\
0 & \la_2 
\end{matrix}
\right)M^{-1}S^{-1},\,\,
M=\left(\begin{matrix}
1 & u \\
0 & v 
\end{matrix}
\right),
$$
$$
B=PN\left(\begin{matrix}
\mu_1 & 0 \\
0 & \mu_2 
\end{matrix}
\right)N^{-1}P^{-1},\,\,
N=\left(\begin{matrix}
1 & x \\
0 & y 
\end{matrix}
\right),
$$
where $\la_1,\la_2,u,v\in \bZ$, $v\,\vert\,(\la_1- \la_2)$, $(u,v)=1$, 
$\mu_1,\mu_2,x,y\in \bZ$, $y\,\vert\,(\mu_1- \mu_2)$,
$(x,y)=1$, $\rad\la_i=\rad\mu_i$, $i=1,2$,
\bbe\label{eq:T}
T=T(\nu_i)=PN\left(\begin{matrix}
\nu_1 & \nu_2 \\
0 & \nu_3
\end{matrix}
\right)M^{-1}S^{-1}\in\GL_2(\caR),\quad \nu_1,\nu_3\in\bQ-\{0\},\,\nu_2\in\bQ.  
\ee
(Note that the above decompositions of $A,B$ are computable by Remark \ref{rem:casebc0}.)
Since $B^iT\in\M_n(\bZ)$ for some $i\in\bN\cup\{0\}$, without loss of generality, we can assume that $T$ itself is integer. Moreover, there exists $j\in\bN\cup\{0\}$ such that
$A^jT^{-1}\in\M_n(\bZ)$. These imply that $\nu_1\in\bZ$ has the same prime divisors as $\la_1$, $\nu_3=kv/y$, 
$k\in\bZ$, $k$ has the same prime divisors as $\la_2$, and 
\bbe\label{eq:main22}
-\frac{u}{v}\nu_1+\frac{x}{y}k+\frac{\nu_2}{v}\in\bZ.
\ee
Furthermore, any $T$ as above is an isomorphism from $G_A$ to $G_B$.
Let 
\begin{eqnarray}\label{eq:triple}
&& \nu_1=\prod_{\text{prime }p\,\vert\la_1}p^{r_p},\quad k=\prod_{\text{prime }p\,\vert\la_2}p^{s_p},\quad\forall r_p,s_p\in\bN\cup\{0\},\\
&& \nu_3=\frac{kv}{y},\quad \nu_2=mv-x\nu_3+u\nu_1,\quad m\in\bZ, \nonumber
\end{eqnarray}
by \eqref{eq:main22}. Then $T\in\Isom(G_A,G_B)$ if and only if $T=B^jT(\nu_i)$, where $j\in\bZ$,
$T(\nu_i)$ is given by \eqref{eq:T}, and $\nu_1,\nu_2,\nu_3$ are given by \eqref{eq:triple}. 

\sbr

Assume the case when $h_A$ is reducible, $\rad(\la_1)$ does not divide $\rad(\la_2)$, and $\rad(\la_2)$ does not divide $\rad(\la_1)$. The result follows from \cite[Proposition 6.9]{s1}, where we addressed the existence of at least one isomorphism from $G_A$ to $G_B$. Now, we are interested in finding all of them. By \cite[Proposition 6.9]{s1}, 
$T\in\Isom(G_A,G_B)$ if and only if $T=B^jT(\nu_i)$, where $j\in\bZ$,
$T(\nu_i)$ is given by \eqref{eq:T}, and $\nu_1$, $\nu_3$ are given by \eqref{eq:triple}, $\nu_2=0$, and \eqref{eq:main22} holds, {\it i.e.}, 
%
\bbe\label{eq:main2}
-\frac{u}{v}\nu_1+\frac{x}{y}k\in\bZ.
\ee
Note that \eqref{eq:main2} gives a system of finitely many congruences for finitely many powers $\{r_p\}_p$, $\{s_q\}_q$. Solving the system is computable and therefore whether $G_A$, $G_B$ are isomorphic is decidable. 
\end{proof}

\begin{example} 
Let 
$
A=\begin{pmatrix}
89 & -69 \\
35 & -15
\end{pmatrix}
$, 
$
B=\begin{pmatrix}
-178 & 276 \\
-137 & 234
\end{pmatrix}, 
$ where $A$ has eigenvalues 
$\la_1=20=2^2\cdot 5$, $\la_2=54=2\cdot 3^3$, $B$ has eigenvalues $\mu_1=-40=-2^3\cdot 5$, 
$\mu_2=96=2^4\cdot 3$.
We have that 
$$
A=SM\left(\begin{matrix}
\la_1 & 0 \\
0 & \la_2 
\end{matrix}
\right)M^{-1}S^{-1},\quad S=\left(\begin{matrix}
1 & 2 \\
1 & 1 
\end{matrix}
\right)\in\operatorname{GL}_2(\bZ),\quad
M=\left(\begin{matrix}
1 & 1 \\
0 & 34 
\end{matrix}
\right),
$$
$$
B=PN\left(\begin{matrix}
\mu_1 & 0 \\
0 & \mu_2 
\end{matrix}
\right)N^{-1}P^{-1},\quad P=\left(\begin{matrix}
2 & 1 \\
1 & 1 
\end{matrix}
\right)\in\operatorname{GL}_2(\bZ),\quad
N=\left(\begin{matrix}
1 & 1 \\
0 & 136 
\end{matrix}
\right).
$$
Thus, in the notation of Proposition \ref{pr:comp2}, 
$u=x=1$, $v=\la_2-\la_1=2\cdot 17$, $y=\mu_2-\mu_1=136=2^3\cdot 17$, 
$\nu_1=\pm 2^{\al_1}5^{\al_2}$,  $k=\pm 2^{\be_1}3^{\be_2}$, $\al_i,\be_i\in\bN\cup\{0\}$, $i=1,2$, 
and \eqref{eq:main2} becomes
$k\equiv 4\nu_1\,\text{mod}\, 136$. Solving the congruence, we get 
$\beta_1 \geq 3, \alpha_1 \geq 1$ or $\beta_1 = 2, \alpha_1 = 0$, and
\[
\beta_{2} - 2\beta_{1} - 5\alpha_{2} + 2\alpha_{1} \equiv t - 4 \pmod{16},
\]
where 
\[
t=
\begin{cases}
0, & \text{if } \operatorname{sgn}(k)=\operatorname{sgn}(\nu_{1}), \\[6pt]
8, & \text{otherwise}.
\end{cases}
\]
Finally, the set of all isomorphisms is $\Isom=\{B^jT(\nu_i)\}_{j\in\bZ}$, where $T(\nu_i)$ is given by \eqref{eq:T}, $\nu_2=0$.
\end{example}

\begin{example} 
Let \[
A=\begin{pmatrix}
99 & -89 \\
45 & -35
\end{pmatrix}
\] and let $B$ as in the previous example. 
Then $A$ has eigenvalues 
$\la_1=10=2\cdot 5$, $\la_2=54=2\cdot 3^3$.
We have that 
$$
A=SM\left(\begin{matrix}
\la_1 & 0 \\
0 & \la_2 
\end{matrix}
\right)M^{-1}S^{-1},\quad S=\left(\begin{matrix}
1 & 2 \\
1 & 1 
\end{matrix}
\right)\in\operatorname{GL}_2(\bZ),\quad
M=\left(\begin{matrix}
1 & 1 \\
0 & 44 
\end{matrix}
\right).
$$
Thus, in the notation of Proposition \ref{pr:comp2}, 
$u=x=1$, $v=\la_2-\la_1=44=4\cdot 11$, 
$\nu_1=\pm 2^{\al_1}5^{\al_2}$,  $k=\pm 2^{\be_1}3^{\be_2}$, $\al_i,\be_i\in\bN\cup\{0\}$, $i=1,2$, 
and  \eqref{eq:main2} becomes
$$
11k-34\nu_1\equiv 0\,\,\text{mod}\,\, 2^3\cdot 11\cdot 17,
$$ 
which implies that $k$ is divisible by $17$, which is impossible. Hence, $G_A$ is not isomorphic to $G_B$. 
\end{example}

\end{document}